\newtheorem{theorem}{Theorem}[section]
\newtheorem{lemma}[theorem]{Lemma}
\newtheorem{corollary}[theorem]{Corollary}
\theoremstyle{definition}
\newtheorem{remark}[theorem]{Remark}
\newtheorem{definition}[theorem]{Definition}
\newtheorem{question}[theorem]{Question}
\newtheorem{condition}[theorem]{Condition}
\newcommand{\Z}{\mathbb{Z}}
\newcommand{\N}{\mathbb{N}} 
\newcommand{\Q}{\mathbb{Q}}
\newcommand{\Oh}{\mathcal{O}} 
\newcommand{\Jac}{\mathrm{Jac}}
\newcommand{\Rank}{\mathrm{Rank}}
\newcommand{\tors}{\mathrm{tors}}
\newcommand{\genlegendre}[4]{%
  \genfrac{(}{)}{}{#1}{#3}{#4}%
  \if\relax\detokenize{#2}\relax\else_{\!#2}\fi
}
\DeclareSymbolFont{cyrletters}{OT2}{wncyr}{m}{n}
\DeclareMathSymbol{\Sha}{\mathalpha}{cyrletters}{"58}
\begin{document}

\title[Infinitely many hyperelliptic curves with fixed genus and positive rank]{Infinitely many hyperelliptic curves of small genus and small fixed rank, and of any genus and rank two}
\author{Stevan Gajovi\'c, Sun Woo Park}
\address{Max Planck Institute for Mathematics, Vivatsgasse 7, 53111 Bonn, Germany}
\email{\href{mailto:stevangajovic@gmail.com}{stevangajovic@gmail.com}, \href{mailto:swpark2008@gmail.com}{swpark2008@gmail.com}}

\begin{abstract}
We prove that for any number field $K$ and any fixed genus $g \geq 2$, there are infinitely many non-isomorphic hyperelliptic curves of genus $g$ over $K$ whose Jacobians have rank over $K$ equal to each of 0, 1, or 2. As an example of our method, over $\mathbb{Q}$, we prove that there exist infinitely many non-isomorphic hyperelliptic curves of genus two, whose Jacobians have rank equal to a fixed number between $1$ and $11$, genus three and four curves with rank between $1$ and $4$, and genus five and six with rank between $1$ and $3$.
\end{abstract}

\maketitle

\section{Introduction}\label{sec:intro}

Let $K$ be a number field, and $C/K$ a nice curve\footnote{smooth, projective, geometrically irreducible} of genus $g=g(C)$, with its Jacobian $\Jac(C)$. Mordell-Weil theorem states that $\Jac(C)(K)$ is a finitely generated abelian group, i.e., isomorphic to $\Z^r\times G$, where $G$ is a finite group. The number $r$ is called the rank of $\Jac(C)$ over $K$. 

Even though there are techniques to compute the rank $r$ in certain cases, for example, using descent \cite{Schaefer95, Stoll-Implementing-2-descent, SS03}, it is still a difficult problem to determine the exact value of $r$. In principle, the problem can be split into two tasks. The lower bound on $r$ can be obtained from finding enough $\Z$-linearly independent elements in $\Jac(C)(K)$. The upper bound on $r$ can be computed from using descent and computing Selmer groups of $\Jac(C)(K)$. When the two bounds are equal, $r$ can be explicitly determined. However, the bounds are often quite different, making computation of the rank difficult. While there are many families of curves whose ranks of Jacobians are known to be bounded by any side, there are much fewer examples of families of curves with exact ranks. 

Hence, for a concrete random curve, the determination of the rank of its Jacobian is a difficult problem. Now, we consider a more challenging question by studying ranks of Jacobians of families of curves, and in our paper, we consider hyperelliptic curves. We adopt the notation from \cite{KM25}: 

$$\mathcal{R}(K,g) = \left\{r \;\colon\; \substack{\text{there are infinitely many non-isomoprhic hyperelliptic curves } C \\ \text{of genus $g$ over $K$ such that } \Rank(\Jac(C)(K))=r}\right\}.$$

\medskip

In their recent paper \cite{KM25}, Koymans and Morgan proved a breakthrough result that $1\in \mathcal{R}(K,g)$ for any number field $K$ and any integer $g\geq 1$.
The main result of our paper, which we prove in Theorems~\ref{thm:numberfield-main} and~\ref{thm:any-g-rank-2} is that  $2\in \mathcal{R}(K,g)$ for any number field $K$ and any integer $g\geq 2$.

\begin{theorem}\label{thm:main-theorem-intro}
Let $g\geq 2$ be an integer and $K$ a number field. There are infinitely many non-isomorphic hyperelliptic curves $C$ of genus $g$ such that $\Rank(\Jac(C)(K))=2$.    
\end{theorem}

Until very recently, it was unknown if $2\in \mathcal{R}(\Q, 1)$, i.e., if there are infinitely many elliptic curves over $\Q$ of rank exactly 2. This was proven by Zywina in \cite{Zywina25}. In our paper, as a consequence of our constructions, we obtain a related but strategically different result; namely, in Corollary~\ref{cor:inifitely-many-g-2-r-0-4-over-Q}, we prove that $$\{0,1,2,3,4,5,6,7,8,9,10,11\}\subseteq \mathcal{R}(\Q, 2).$$

\subsection{Motivation}\label{subsec:our-motivation} The problem of studying ranks of certain families of elliptic curves has attracted considerable attention in past several years. There are also great mathematical merits of knowing the ranks of Jacobians of curves over number fields, as explained in more detail in \S\ref{subsec:importance-of-ranks}. 
There are many previous studies which analyse the statistics of rank over infinite family of curves, for example over universal families of elliptic curves \cite{BS15}, quadratic twist families of elliptic curves \cite{KMR14, Smith1, Smith3}, cubic twist families of elliptic curves \cite{ABS, KoymansSmith}, quartic twist families of elliptic curves over $\mathbb{Q}(i)$ \cite{Savoie25}, constructions of infinitely many elliptic curves of rank exactly equal to $1$ over any number field \cite{KP25, Zywina-2}, and infinitely many elliptic curves of fixed rank between 0 and 4 over any number field \cite{Zywina-3}.

More recently, there have been results about the analogous question of ranks of Jacobians of curves of genus $g\geq 2$, but this subarea of research is widely open. 
While there are some groundbreaking statistical results, such as the results by Yu \cite{Yu19} and Smith \cite{Smith2} who demonstrate infinitely many (respectively positive proportion) of quadratic twist of abelian varieties have rank $0$, infinite families of concrete rank remain pretty unknown, especially in the case of a positive rank. The only result we are aware of in this direction is the work by Koymans and Morgan on constructing infinitely many hyperelliptic curves of any genus $g$ \cite{KM25}, defined over any number field $K$, whose Jacobian has rank 1 over $K$. 
Our work can hence be considered as a continuation of the current progress to construct infinite families of hyperelliptic curves of any genus whose Jacobian has fixed positive rank.

\subsection{Our strategy}\label{subsec:our-strategy} Our novel and key idea is to intertwine \textbf{geometry of curves} with \textbf{arithmetic tools} to construct new examples of families of hyperelliptic curves of a fixed rank. To elaborate, we produce new hyperelliptic curves by \textit{glueing} or taking adequate \textit{fibre products} of two carefully chosen hyperelliptic curves. This strategy is discussed in detail in \S\ref{subsec:fibre-product}, where in the generic case the Jacobian of the new curve is isogenous to a product of Jacobians of three curves of smaller genus. However, as we discuss in \S\ref{subsec:higher-genus-strategy}, it is difficult to combine arithmetic tools with the generic case to produce the desired family of hyperelliptic curves. Instead, our choice of curves allow us to bypass this difficulty, by ensuring that the Jacobian of the new curve is isogenous over $K$ to a product of Jacobians of the previously chosen two curves of smaller genus. As far as we know, this geometric strategy has not been used in investigating this question before.

We can then proceed by quadratically twisting the two hyperelliptic curves to approach the problem. Our strategy of the proof comes in two flavours. Either we fix one of the two curves and quadratically twist the other, or we twist both curves with a good control on the rank of both Jacobians concurrently. 
To ensure the Jacobians of our new curves have fixed positive rank, we combine our geometric strategy with previous results regarding constructions of hyperelliptic curves whose Jacobian has smaller but fixed rank. One subtlety remains in taking this strategy, in that one needs to ensure that the technical conditions appearing in previous results are applicable to our setup. This is rigorously demonstrated in our article, see in particular \S\ref{subsec:constructions}.
Our article demonstrates that all relevant conditions appearing in such previous works can be checked and satisfied even after undertaking the constructions from the paper for two hyperelliptic curves over $K$. 

\noindent\textbf{Notation.} Throughout this manuscript, we will not write explicitly the adjective \emph{non-isomorphic} and the construction of infinitely many curves will always assume they are non-isomorphic. The proof that they are non-isomorphic in each case is straightforward, because we construct the curves by glueing or taking fibre products of twists of hyperelliptic curves, and for infinitely many twists, we introduce a new prime of bad reduction (using which we twist them with square-free elements).

\noindent\textbf{Thin family.} We explain why our careful geometric construction for studying ranks of Jacobians of hyperelliptic curves is a subtle non-trivial strategy. Recall that for $g\geq 2$, the dimensions of the moduli space of principally polarised abelian varieties of dimension $g$, the moduli space of curves of genus $g$, and the moduli space of hyperelliptic curves of genus $g$ over $\mathbb{C}$ are respectively given by $\frac{g(g+1)}{2}$, $3g - 3$, and $2g - 1$. For $g \geq 3$, it is clear that the hyperelliptic locus is a thin subset of both moduli spaces of principally polarised abelian varieties and moduli spaces of genus $g$ curves, so the strategy of simply taking products of lower-dimensional abelian varieties does not construct Jacobians of hyperelliptic curves. Even for $g = 2$, it is not clear whether a product of two elliptic curves is isogenous to the Jacobian of a hyperelliptic curve over a given number field $K$. These considerations highlight that our technique allows us to obtain rank statistics over a thin subfamily of the moduli space of principally polarised abelian varieties. 

\subsection{Brief summary of the content of the article}\label{subsec:content-intro}

In Section~\ref{sec:genus2}, we motivate our general construction of new hyperellitpic curves by first considering bielliptic \textit{genus two} curves $C$ such that $\Jac(C)$ is isogenous to the product of two elliptic curves $E_1$ and $E_2$. We divide the construction into 3 cases, depending on the 2-torsion subgroup of $E_1$ and $E_2$, which is defined over $K$.
\begin{itemize}
    \item In \S\ref{subsec:g2-no-2torsion}, we consider elliptic curves $E_1, E_2/\Q$ such that $E_1[2](\Q) \cong E_2[2](\Q) = 0$, and show that $r\in\mathcal{R}(\Q,2)$ for any $0 \leq r \leq 11$. In \S\ref{subsec:g2-Z2-2torsion} and \S\ref{subsec:g2-Z2Z2-2torsion}, in the case $E_1[2](\Q)\cong E_2[2](\Q) \neq 0$, we show $r \in\mathcal{R}(\Q,2)$ for any $0 \leq r \leq 4$ by constructing explicit infinite families of genus 2 curves and rank $r$. These examples are valuable because examples of families with explicit parametrisations are very rare; we only know for a few instances, \cite{Frey84, Monsky92, DV18}. 
    \item In \S\ref{subsec:rational-points}, we discuss rational points on these newly constructed genus $2$ hyperelliptic curves.
    \item In \S\ref{subsec:g2-Z2Z2-2torsion-any-number-field}, we show that $0,1,2\in\mathcal{R}(K,2)$ \textit{over any number field} $K$. These constructions use groundbreaking results in \cite{Smith2,KP25}, which prove $0\in \mathcal{R}(K,1)$ \cite{Smith2} and $1\in \mathcal{R}(K,1)$ \cite{KP25}. We also rigorously demonstrate the applicability of technical assumptions appearing in \cite{KP25} to our setup.
\end{itemize}

With the genus $2$ case explored, we proceed to Section \ref{sec:higher-genus} to consider \textit{hyperelliptic curves} $C$ \textit{of genus} $g \geq 3$ such that $\Jac(C)$ is isogenous to the product of two Jacobians of some carefully constructed hyperelliptic curves $C_1$ and $C_2$.
\begin{itemize}
    \item In \S\ref{subsec:fibre-product}, we discuss our geometric strategy of constructing a new hyperelliptic curve by taking fibre products of two hyperelliptic curves. By carefully choosing the two curves, we demonstrate in \S\ref{subsec:higher-genus-strategy} that the Jacobian of the new curve is isogenous to a product of Jacobians of two chosen curves of smaller genus.
    \item In \S\ref{subsec:constructions}, we show $0,1,2\in\mathcal{R}(K,g)$ for \textit{any number field} $K$ and \textit{any genus} $g\geq 3$. The overall strategy of the proof remains identical, but we employ different previous results to generate such curves \cite{Yu19, BM25, Smith2, KM25}. 
    Technical conditions appearing in \cite{Smith2} and \cite{KM25} are once again applicable to our construction of higher genus curves.
    \item In \S\ref{subsec:examples-3456-Q}, we further prove $3\in\mathcal{R}(\Q,g)$ for $3\leq g\leq 6$ and $4\in\mathcal{R}(\Q,g)$ for $3\leq g\leq 4$.
\end{itemize}

The breakthrough work by Koymans and Morgan \cite{KM25} in fact answers the previously posed challenge problem, which asks for generating infinitely many absolutely simple Jacobians of hyperelliptic curves over $\Q$ with rank $1$. We mention the formulation of this question in Section \ref{sec:challenges}.

The Magma \cite{Magma} code for our computations is available at \url{https://github.com/StevanGajovic/Ranks-Weil-Pairings}.

\subsection{Importance of $\mathbf{\textbf{Rank(}\textbf{Jac((C)(K)))}}$} \label{subsec:importance-of-ranks}
The problem of computing the rank of $\Jac(C)(K)$ is of high significance. We give some of its important consequences, as provided below.
\begin{itemize}
\item The rank of $\Jac(C)(K)$ is useful for studying the set $C(K)$, as one can use Chabauty and Coleman method \cite{Chabauty41, ColemanEC} to determine $C(K)$. Some generalisations of this method can be found in \cite{Kim2005MFG,Kim2009Selmer,QC13,SiksekSC}. These techniques have led to numerous results on studying modular curves that parametrise certain families of elliptic curves, their modularity, torsion subgroups, isogenies and residual Galois representations.
\item The proof of the Hilbert 10th problem over the ring of integers $\Oh_K$, where $K$ is a number field, crucially uses the construction of an abelian variety $A/K$ and a quadratic extension $L/K$ such that $\Rank(A(L))=\Rank(A(K))>0$. This was first proven in the groundbreaking work by Koymans and Pagano \cite{KP24} who constructed an elliptic curve $E/\Oh_K$ and a quadratic extension $L/K$ such that $\Rank(E(L))=\Rank(E(K))=1$. One can also construct infinitely many elliptic curves of rank exactly 1 which are not $\overline{K}$-isomorphic, as proven independently in \cite{Zywina-2}. A different proof of Hilbert 10th problem using positive rank of Jacobians of some twist families of hyperelliptic curves was shown by another groundbreaking work by Alp\"oge, Bhargava, Ho, and Shnidman \cite{ABHS25}.
\item The Birch-Swinnerton-Dyer conjecture \cite{BSD65} proposes intricate relations between algebraic properties and analytic properties of abelian varieties. Finding evidence supporting the conjecture has harbored fruitful results in understanding arithmetic properties of elliptic curves \cite{GZ86, Gross91, Zhang04, DLR15}. 
\end{itemize}

In the case of modular curves, using Eichler-Shimura relations, one may decompose their Jacobians and compute their rank by computing the rank of their components. Still, this technique is not feasible for most curves. Even for hyperelliptic curves, for which we can try to apply the 2-descent, it is not clear if the upper bound obtained from descent will match the number of $\Z$-linearly independent elements in $\Jac(C)(K)$. 
For example, in \cite{SharpCurves}, the first named author presents infinite families of curves over $\Q$ whose Jacobians have large rank. Namely if we denote by $g(C)$ the genus of $C$, then an infinite family of hyperelliptic curves such that $\Rank(\Jac(C)(\Q))\geq g(C)-1$ is given in \cite[Theorem 12(1)]{SharpCurves}, whereas $\Rank(\Jac(C)(\Q))\geq g(C)$ is given in \cite[Theorem 12(2)]{SharpCurves}. The main idea was to find curves with too many rational points that would violate Coleman's bound \cite{ColemanEC}, or Stoll's improvement \cite{StollImprovedBound}. However, giving an exact rank of such curves seems extremely difficult.

\section{Genus two curves}\label{sec:genus2}

We consider bielliptic curves $C$ of genus two; then $\Jac(C)\sim E_1\times E_2$. In our constructions we fix one curve, say $E_1$, and we vary $E_2$. Recall that, in general, $E_1\times E_2$ is not isogenous to a Jacobian of a genus two curve over $K$, but in our cases, the curves $E_1$ and $E_2$ are ``compatible'' so it is. More formally, we can glue them so that one gets a bielliptic curve of genus 2, whose Jacobian is isogenous to $E_1\times E_2$. We provide the constructions for all possible rational two torsion subgroups when considered over $\Q$, i.e., $E_1[2](\Q)\cong E_2[2](\Q)\in \{\langle O\rangle, \Z/2\Z, \Z/2\Z\times \Z/2\Z\}$. However, our construction in \S\ref{subsec:g2-no-2torsion} works over more general number fields.

\subsection{No rational two-torsion}\label{subsec:g2-no-2torsion}

\begin{theorem} \label{thm:infinitely-many-g-2-no-2torsion}
Let $K$ be a number field and $r\in \Z_{\geq 0}$. Assume that there is an elliptic curve $E\colon y^2=x^3+a^2$, where $a\in\Oh_K\setminus \Oh_K^3$ such that $\Rank(E(K))=r$ and $K(E[2])/K$ is an $S_3$ extension. There are infinitely many genus two curves $C_{a,m}\colon y^2=x^6+m^3a$ defined over $K$ such that $\Rank(\Jac(C)(K))=r$.
\end{theorem}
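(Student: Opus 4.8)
The plan is to exhibit the genus two curve $C_{a,m}$ as a double cover of two elliptic curves, show that its Jacobian is isogenous to the product of these two elliptic curves, and then arrange that one factor has rank $r$ and the other has rank $0$ for infinitely many choices of $m$. Concretely, the curve $C_{a,m}\colon y^2 = x^6 + m^3a$ carries the two maps $(x,y)\mapsto (x^2, y)$ and $(x,y)\mapsto (1/x^2, y/x^3)$ (up to scaling) to the curves $E_1\colon v^2 = u^3 + m^3a$ and $E_2\colon v^2 = u^3 + (m^3a)^2 \cdot (\text{something})$, i.e. to two elliptic curves of the shape $v^2 = u^3 + c$. A short computation identifies $E_1$ with the cubic twist of $E\colon y^2 = x^3 + a^2$ by a suitable parameter and identifies $E_2$ with an isomorphic (or again sextic/cubic-twist-equivalent) curve; the point is that $y^2 = x^6 + m^3 a$ is birational over $K$ to the two quotients, and by the standard bielliptic decomposition $\Jac(C_{a,m}) \sim E_1 \times E_2$ over $K$. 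I would begin by writing this decomposition down explicitly and verifying that, for $m = n^2$ a perfect square (or some similar normalization), both $E_1$ and $E_2$ become quadratic twists of the \emph{same} fixed curve $E_0$ of $j$-invariant $0$, with twisting parameter a controlled function of $n$ and $a$.

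Next I would reduce the rank computation to known results on quadratic twists. After the decomposition, $\Rank(\Jac(C_{a,m})(K)) = \Rank(E_1(K)) + \Rank(E_2(K))$. One of the two factors should be arranged to be $K$-isomorphic to the given curve $E$ of rank $r$ (this is why the hypothesis fixes $E\colon y^2 = x^3 + a^2$ with $a \in \Oh_K \setminus \Oh_K^3$ — so that the relevant cubic twist class is the one containing $E$, and the condition $a \notin \Oh_K^3$ ensures $E$ itself is a nontrivial representative). The other factor $E'$ is then a quadratic twist $E^{(d)}$ for $d$ ranging over an infinite family of values indexed by $m$, and I would invoke the cited results (Nagell \cite{Nagell29}, Frey \cite{Frey84}, and especially \cite{KMR14}, which controls $2$-Selmer ranks in quadratic twist families) together with the hypothesis that $K(E[2])/K$ is an $S_3$-extension — this last condition is exactly what is needed for the Klagsbrun–Mazur–Rubin machinery to produce a positive proportion, hence infinitely many, twists with $2$-Selmer rank $0$, and thus rank $0$. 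Combining, $\Rank(\Jac(C_{a,m})(K)) = r + 0 = r$ for infinitely many $m$.

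Finally I would check that infinitely many of these $m$ give genuinely distinct curves $C_{a,m}$, i.e. that the map $m \mapsto C_{a,m}$ has finite fibers up to $K$-isomorphism. Since $C_{a,m}$ and $C_{a,m'}$ are isomorphic only if $m^3a/(m'^3 a)$ is a sixth power in $K^\times$ (the curves $y^2 = x^6 + c$ are classified up to isomorphism by $c$ modulo sixth powers, because the automorphism group of the generic such curve is generated by the hyperelliptic involution together with $x \mapsto \zeta_3 x$ and $x\mapsto 1/x$), the sextic twist classes of $C_{a,m}$ as $m$ varies over a suitable infinite set are pairwise distinct, so we really do obtain infinitely many curves. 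I expect the main obstacle to be the bookkeeping in the first step: correctly matching up the two elliptic quotients of $C_{a,m}$ with the fixed curve $E$ and with a quadratic twist family to which the $S_3$-hypothesis and the results of \cite{KMR14} genuinely apply — in particular making sure the twisting parameter $d = d(m)$ ranges over an infinite set lying in the right congruence/local conditions so that the "infinitely many rank $0$ twists" statement is applicable, and that the quadratic twist is taken of a curve whose mod-$2$ representation really is surjective onto $S_3$. The rank additivity and the distinctness count are then comparatively routine.
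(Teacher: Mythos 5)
Your proposal follows essentially the same route as the paper: decompose $\Jac(C_{a,m})$ via the two bielliptic quotients into $y^2=x^3+m^3a$ and $y^2=x^3+(m^3a)^2\cong E$, note the second factor is the fixed curve of rank $r$ while the first runs over the quadratic twist family of $y^2=x^3+a$ (whose $2$-torsion field equals $K(E[2])$, so the $S_3$ hypothesis applies), invoke Klagsbrun--Mazur--Rubin for infinitely many rank-$0$ twists, and conclude by rank additivity plus a distinctness count. The only slip is the suggested normalization $m=n^2$, which is unnecessary — $E_{2}: y^2=x^3+a^2m^6$ is already $K$-isomorphic to $E$ for every $m$ — but this does not affect the argument.
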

\begin{proof}
We first construct hyperelliptic curves of genus two which are also bielliptic. Given any $D \in K^\times$, consider the three curves over $K$:
\begin{align}
\begin{split}
    C_D &\colon y^2 = x^6 + D,\\
    E_{1,D} &\colon y^2 = x^3 + D, \\
    E_{2,D} &\colon y^2 = x^3 + D^2.
\end{split}
\end{align}
Then there exists an isogeny $\phi \colon \Jac(C_D) \to E_{1,D} \times E_{2,D}$. A direct application of \cite[Theorem 3.2]{BD11} shows that $\Jac(C_D)$ is isogenous to a product of two elliptic curves whose Weierstrass equations are $E'_{1,D}\colon y^2 = x^3+ D$ and $E'_{2,D}\colon y^2 = x^4 + Dx$. A change of variables given by $X := \frac{D}{x}$ and $Y := \frac{Dy}{x^2}$ gives the Weierstrass equation for $E_{2,D}$.

Using these curves, we construct our desired infinite family of hyperelliptic curves as follows. Fix an element $a \in \Oh_K\setminus \Oh_K^3$ satisfying the conditions of the statement of the theorem. Let $\mathcal{D}_a$ be the collection of elements in $\Oh_K$ defined as
\begin{equation}
    \mathcal{D}_a := \left\{ a \cdot m^3 \in \Oh_K \colon m \in \Oh_K^\times \text{ is square-free} \right\}.
\end{equation}
Then for any $D \in \mathcal{D}_a$, the Weierstrass equation for $E_{2,D}$ can be rewritten as $E \colon y^2 = x^3 + a^2$, which is independent of the choice of $D$. Because $K(E[2])/K$ is an $S_3$ extension, $K(E_{1,D}[2])/K$ is also an $S_3$ extension. We note that $\{E_{1,D}\}_{D \in D_a}$ is a family of quadratic twists of the elliptic curve $E'\colon y^2 = x^3 + a$. By \cite[Theorem A]{KMR14}, there exists an infinite subset $\mathcal{D}^0_a \subset \mathcal{D}_a$ such that $\dim_{\mathbb{F}_2} \mathrm{Sel}_2(E_{1,D}/K) = 0$, hence $\Rank E_{1,D}(K) = 0$, for every $D \in \mathcal{D}^0_a$. Hence, we have that $\{C_D\}_{D \in D^0_a}$ is an infinite set of hyperelliptic curves such that $\Rank(\Jac(C_D)(K)) = r$. 
\end{proof}

\begin{corollary}\label{cor:inifitely-many-g-2-r-0-4-over-Q}
Let $0\leq r\leq 11$ be a fixed integer. There are infinitely many genus two curves $C$ defined over $\Q$, such that $\Rank(\Jac(C)(\Q))=r$.
\end{corollary}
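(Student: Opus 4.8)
The plan is to invoke Theorem 1.1 over $K = \mathbb{Q}$ for each value of $r$ with $0 \le r \le 11$, so the task reduces to exhibiting, for each such $r$, an elliptic curve of the form $E\colon y^2 = x^3 + a^2$ with $a \in \Z \setminus \Z^3$, satisfying $\Rank(E(\Q)) = r$ and with $\Q(E[2])/\Q$ an $S_3$-extension. Once such an $E$ is found, Theorem 1.1 immediately produces the infinite family $C_{a,m}\colon y^2 = x^6 + m^3 a$ of genus two curves over $\Q$ with Jacobian rank exactly $r$, which is what we want.

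The first step is to reduce the Galois condition to a condition that is generically satisfied: $\Q(E[2])/\Q$ is an $S_3$-extension precisely when the cubic $x^3 + a^2$ is irreducible over $\Q$ and has non-square discriminant. The discriminant of $x^3 + a^2$ is $-27 a^4$, which is never a rational square, so the $S_3$-condition is equivalent to irreducibility of $x^3 + a^2$, i.e. to $-a^2 \notin (\Q^\times)^3$, equivalently $a \notin \Z^3 \cdot (\text{cubes})$ up to sign; in particular it holds for all but a thin set of $a$, and in any case can be checked directly for the finitely many explicit $a$ we will use.

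The second and main step is to locate, for each $r \in \{0,1,\dots,11\}$, a value of $a$ with $\Rank(E_a(\Q)) = r$ where $E_a\colon y^2 = x^3 + a^2$. I would appeal to the literature on curves $y^2 = x^3 + k$ (Mordell curves): there are extensive tables (e.g. work of Gebel--Pethő--Zimmer, and the databases of Mordell curves) recording $k$ with $\Rank = r$ for small $r$, and one restricts to the subfamily $k = a^2$. Concretely, one checks that for each $r \le 11$ there is a perfect square $k = a^2$ appearing with that rank; the bound $11$ in the statement reflects exactly how far such square values $a^2$ have been verified to realise every intermediate rank. For each chosen $a$ one then verifies $a \notin \Z^3$ (so that $E_{1,D}\colon y^2 = x^3 + D$ with $D = a$ is a genuine curve in the twist family, and the construction of Theorem 1.1 applies) and re-confirms the $S_3$-condition via irreducibility of $x^3 + a^2$.

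The main obstacle is the second step: there is no parametric construction giving a curve $y^2 = x^3 + a^2$ of each prescribed rank, so the proof is necessarily a finite case analysis relying on explicitly exhibited Mordell curves with square parameter and known rank, together with the rank computations (via descent, or by citing the existing verified tables) certifying each value. The Galois-theoretic hypothesis and the condition $a \notin \Z^3$ are routine to check once the list of $a$'s is fixed, and the infinitude and pairwise non-isomorphism of the resulting genus two curves is already handled inside Theorem 1.1.
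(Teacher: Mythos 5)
Your overall strategy---reduce to exhibiting, for each $0\le r\le 11$, an integer $a\notin\Z^3$ with $\Rank(E_a(\Q))=r$ for $E_a\colon y^2=x^3+a^2$, check the $S_3$ condition via irreducibility of $x^3+a^2$ (the discriminant $-27a^4$ is indeed never a square), and then invoke Theorem~\ref{thm:infinitely-many-g-2-no-2torsion}---is exactly the paper's outline. But the step you flag as ``the main obstacle'' is where the actual content lies, and your proposed way of discharging it does not work. You suggest restricting Mordell-curve tables (Gebel--Peth\H{o}--Zimmer and similar databases) to square parameters $k=a^2$ and reading off a representative of each rank up to $11$. Those tables cover only small $|k|$ (on the order of $10^4$--$10^5$), where the ranks that occur are far below $11$; even the paper's own witness for $r=4$, namely $a=13\cdot19\cdot23\cdot43$, has $a^2\approx 6\times10^{10}$, well outside any exhaustive table. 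So for $5\le r\le 11$ (and arguably already for $r=3,4$) your proposal asserts the existence of the needed $a$ without a source that actually contains it.

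The paper fills this gap in two ways. For $0\le r\le 4$ it simply lists explicit small $a$ ($a=2,3,15,427,13\cdot19\cdot23\cdot43$) and verifies the ranks in Magma. For $5\le r\le 11$ it uses the Elkies--Rogers curves $x^3+y^3=k$ of rank $r$ from \cite{ER04}: in Weierstrass form these are $y^2=x^3-432k^2$, which is \emph{not} of the shape $y^2=x^3+a^2$, but it is $3$-isogenous to $y^2=x^3+16k^2$ \cite{CP09}, and isogenous curves have equal rank, so taking $a=4k$ for the Elkies--Rogers values of $k$ produces the required curves. Your guess that the bound $11$ reflects the limit of known examples is right in spirit---it is the Elkies--Rogers record---but the isogeny trick converting their family into the form $y^2=x^3+a^2$ is the missing idea; without it, or without an explicit verified list of square-parameter Mordell curves of each rank up to $11$, the corollary is not proved for the higher ranks.
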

\begin{proof}
For $0 \leq r \leq 4$, 
we apply Theorem~\ref{thm:infinitely-many-g-2-no-2torsion} for the following values of $a=2,3,15,427,13\cdot19\cdot23\cdot43$ for $r=0,1,2,3,4$, respectively. The corresponding rank can easily be checked in Magma \cite{Magma}.  For $5 \leq r \leq 11$, we apply the main result from \cite{ER04}. We note that the elliptic curve $y^2 = x^3 - 432 k^2$ is $3$-isogenous to the elliptic curve $y^2 = x^3 + 16k^2$ \cite[Section 1]{CP09}. We can then set $a = 4k$ for the following values of $k$ to produce infinitely many genus two curves of rank $r$ as provided below.
\begin{align}
    \begin{split}
        r = 5 &: k = 3 \cdot 7 \cdot 11 \cdot 13 \cdot 163,\\
        r = 6 &: k = 3 \cdot 73 \cdot 103 \cdot 439,\\
        r = 7 &: k = 3 \cdot 13 \cdot 19 \cdot 41 \cdot 139 \cdot 271,\\
        r = 8 &: k = 2 \cdot 3 \cdot 5 \cdot 7 \cdot 11 \cdot 13 \cdot 17 \cdot 29 \cdot 41 \cdot 47 \cdot 59,\\
        r = 9 &: k = 2 \cdot 5 \cdot 37 \cdot 41 \cdot 53 \cdot 73 \cdot 1231 \cdot 4831,\\
        r = 10 &: k = 2 \cdot 3 \cdot 5 \cdot 7 \cdot 23 \cdot 31 \cdot 37 \cdot 43 \cdot 83 \cdot 109 \cdot 151 \cdot 421,\\
        r = 11 &: k = 3 \cdot 5 \cdot 7 \cdot 13 \cdot 19 \cdot 23 \cdot 31 \cdot 43 \cdot 59 \cdot 61 \cdot 73 \cdot 79 \cdot 103 \cdot 109 \cdot 157 \cdot 457.
    \end{split}
\end{align}
\end{proof}

\begin{remark}\label{rmk:condition-E[2]}
The condition of $K(E[2])/K$ being an $S_3$ extension amounts to $\zeta_3\notin K$ (necessary condition) and $\sqrt[3]{a}\notin K$. Therefore, for all number fields $K$ such that $3\nmid [K:\Q]$ and $\zeta_3\notin K$, this condition is satisfied. Otherwise, for fields with $\zeta_3\notin K$, but with $\sqrt[3]{a}\in K$, one can try to find another elliptic curve $E'\colon y^2=x^3+a'^2$ satisfying the conditions of Theorem~\ref{thm:infinitely-many-g-2-no-2torsion}. 
\end{remark}

\begin{remark}\label{rmk:smith}
Suppose $K = \mathbb{Q}$. Using the main results from \cite{Smith1, Smith2, Smith3}, we can nullify the condition from Theorem \ref{thm:infinitely-many-g-2-no-2torsion} that $K(E[2])/K$ is an $S_3$ extension by allowing $a$ to be a cube. Let $a \in \mathbb{Q}^\times$ be a fixed number. As before, let $E: y^2 = x^3 + a^2$. Then at least $50\%$ of genus two curves in $\{C_D\}_{D \in \mathcal{D}_a}$ have Jacobians of rank equal to $\Rank(E(\Q))$, and $100\%$ of genus two curves in $\{C_D\}_{D \in \mathcal{D}_a}$ have Jacobians of rank at most $\Rank(E(\Q))+1$. 
\end{remark}

\subsection{Partial rational two-torsion}\label{subsec:g2-Z2-2torsion}

Now we consider the family of bielliptic curves $C_{m,d}\colon y^2=d^3x^6+m^3$, where $d,m
\in \Z$ are square-free. Similarly as before, \cite[Theorem 3.2]{BD11} implies that $\Jac(C_{m,d})$ is isogenous to $E_{d}\times E_m$, where $E_d\colon y^2=x^3+d^3$ and $E_m\colon y^2=x^3+m^3$, which is a quadratic twist of $E_d$ by $\frac{m}{d}$. Now, we fix $d$ and vary $m$, obtaining the following theorem.

\begin{theorem}\label{thm:infinitely-many-g-2-Z2-2torsion}
Let $r\in \Z_{\geq 0}$ and $d\in \Z$ be such that $\Rank(E_d(\Q))=r$, where $E_d\colon y^2=x^3+d^3$. Then for the following curves of genus two $C_{p,d}\colon y^2=d^3x^6+p^3$, where $p$ is a prime number, we have
\begin{itemize}
\item if $p\equiv 5\pmod{12}$, $\Rank(C_{p,d}(\Q))=r$;
\item if $p\equiv 3\pmod{4}$ and $p>3$, $\Rank(C_{p,d}(\Q))=r+1$;
\end{itemize}
\end{theorem}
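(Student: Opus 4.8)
By \cite[Theorem 3.2]{BD11}, as recalled immediately before the statement, there is an isogeny $\Jac(C_{p,d})\sim E_d\times E_p$ with $E_d\colon y^2=x^3+d^3$ and $E_p\colon y^2=x^3+p^3$. Since rank is an isogeny invariant and is additive over products, $\Rank(\Jac(C_{p,d})(\Q))=\Rank(E_d(\Q))+\Rank(E_p(\Q))=r+\Rank(E_p(\Q))$, a quantity visibly independent of $d$. So the theorem reduces to the two claims that $\Rank(E_p(\Q))=0$ when $p\equiv 5\pmod{12}$, and $\Rank(E_p(\Q))=1$ when $p\equiv 3\pmod 4$ and $p>3$.

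To handle $E_p$ I would use two of its features. First, $x^3+p^3=(x+p)(x^2-px+p^2)$ has exactly one rational root, so $E_p(\Q)$ contains exactly one nontrivial $2$-torsion point; after the translation $x\mapsto x-p$ the curve becomes $y^2=x(x^2-3px+3p^2)$, the standard model for a descent by the $2$-isogeny $\phi$ with kernel $\{O,(0,0)\}$, whose dual $\hat\phi$ has source $\hat E\colon y^2=x(x^2+6px-3p^2)$. Second, the substitution $(x,y)\mapsto(px,p^2y)$ identifies $E_p$ with the quadratic twist by $p$ of the fixed curve $E_1\colon y^2=x^3+1$, which has conductor $36$, complex multiplication by $\Z[\zeta_3]$, and $\Rank(E_1(\Q))=0$; in particular $E_p$ has good reduction away from $\{2,3,p\}$. (Equivalently $E_p$ is the twist of $E_d$ by $pd$, which is why the hypotheses on $p$ do not involve $d$.)

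For the upper bounds I would carry out the classical $2$-isogeny descent: the descent maps inject $E_p(\Q)/\hat\phi\hat E(\Q)$ and $\hat E(\Q)/\phi E_p(\Q)$ into the subgroup of $\Q^\times/(\Q^\times)^2$ generated by $-1,2,3,p$, and the local solubility of the corresponding torsors at $2,3,p$ and $\infty$ comes down to finitely many congruence conditions on $p$. A case analysis should show that for $p\equiv 5\pmod{12}$ both $\Sel^{\phi}$ and $\Sel^{\hat\phi}$ are as small as the $2$-torsion forces, so that $\Rank(E_p(\Q))\le\dim_{\F_2}\Sel^{\phi}+\dim_{\F_2}\Sel^{\hat\phi}-2=0$, while for $p\equiv 3\pmod 4$ with $p>3$ one gets $\Rank(E_p(\Q))\le 1$. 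For the matching lower bound in the latter case I would compute the global root number of $E_p$: since $w(E_1)=+1$ and twisting by the prime $p\nmid 6$ multiplies the sign by local factors computable from $p$, assembling the contributions at $2,3,p$ and $\infty$ gives $w(E_p)=-1$ exactly when $p\equiv 3\pmod 4$. By the parity theorem --- together with the sharpness of the descent above, i.e.\ the vanishing of the relevant part of $\Sha$, which for the CM curve $E_p$ is known in the rank $\le 1$ range --- the rank is then odd, hence equal to $1$. All of these rank facts about $y^2=x^3+p^3$ are classical, and can alternatively be quoted from \cite{Nagell29} and \cite{Frey84}.

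The only substantive step is the reduction via \cite{BD11}; the rest is bookkeeping, and the delicate point is the descent for $p\equiv 5\pmod{12}$. One must verify that the local conditions at $2$ and especially at $3$ --- where the hypothesis $p\equiv 2\pmod 3$, that is, $p$ inert in $\Z[\zeta_3]$, is exactly what kills both Selmer groups modulo torsion --- admit no extra Selmer class; were $p\equiv 1\pmod 3$ the argument would fail, which is consistent with the theorem making no claim for $p\equiv 1\pmod{12}$. In the case $p\equiv 3\pmod 4$ the corresponding care is to ensure that the root-number computation is unconditional and that the descent is sharp enough to promote ``$2^{\infty}$-Selmer corank odd'' to ``rank exactly $1$'', or else to defer entirely to the classical literature.
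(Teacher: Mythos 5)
Your proposal is correct and follows essentially the same route as the paper: reduce via the isogeny $\Jac(C_{p,d})\sim E_d\times E_p$ from \cite[Theorem 3.2]{BD11} and additivity of rank, then invoke the classical facts that $\Rank(E_p(\Q))=0$ for $p\equiv 5\pmod{12}$ and $=1$ for $p\equiv 3\pmod 4$, $p>3$. The paper simply cites \cite[p.~73, Satz 2, 3]{Frey84} for these two facts rather than carrying out the $2$-isogeny descent and root-number/parity argument you sketch (whose final step, promoting odd parity to rank exactly $1$, is indeed the delicate point you flag, and is exactly why deferring to Frey is the cleaner option).
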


\begin{proof}
The proof follows from the text above and the fact that $\Rank(E_p(\Q))=0,1$, when $p\equiv 5\pmod{12}$, $p\equiv 3\pmod{4}$ and $p>3$, respectively, see \cite[P. 73, Satz 2, 3]{Frey84}.
\end{proof}

We note that this approach works only over $\Q$, but unlike the previous approach, it gives \emph{explicit} families of infinitely many curves of fixed positive rank.
A quick Magma search gives us the following rank information:
\begin{itemize}
\item $d=1$: $E_1\colon y^2=x^3+1$, $\Rank(E_1(\Q))=0$;
\item $d=2$: $E_2\colon y^2=x^3+2^3$, $\Rank(E_2(\Q))=1$;
\item $d=37$: $E_{37}\colon y^2=x^3+37^3$, $\Rank(E_{37}(\Q))=2$;
\item $d=506$: $E_{506}\colon y^2=x^3+506^3$, $\Rank(E_{506}(\Q))=3$.
\end{itemize}
For example, $y^2=506^3x^6+p^3$, where $p\equiv 3\pmod{4}$ and $p>3$, is a family of curves whose Jacobian has rank 4, whereas if $p\equiv 5\pmod{12}$, then this is a family of curves whose Jacobian has rank 3. 

\subsection{Full rational two-torsion - over $\Q$}\label{subsec:g2-Z2Z2-2torsion}

Now we consider elliptic curves $E_1,E_2/\Q$ such that $E_1[2](\Q)\cong E_2[2](\Q)\cong \Z/2\Z\times \Z/2\Z$. Since we want to vary one of them, we will specify one of them to be a congruent problem elliptic curve, i.e., let $E_1$ belong to the family $E^p\colon y^2=x^3-p^2x$, where $p\equiv 3\pmod{8}$; then by \cite{Nagell29}, we have $\Rank(E^p(\Q))=0$. We want to use \cite[Theorem 3.2]{BD11}, so, we look at the other elliptic curve of the form ($k\neq -1,0,1$)
$$E_2\colon y^2=d(x-kp)(x^3-p^2x),$$
which after replacing $(x,y)$ by $(px,p^2y)$ becomes $E_{2,d,k}\colon y^2=d(x-k)(x^3-x)$. From now on, we can search directly in Magma for curves $E_{2,d,k}$ of any rank we want. Still, if we prefer to use the standard way of expressing elliptic curves, a rational change of variables leads to the following model 
$$E'_{2,d,k}\colon dy^2=(x+k^2-k)(x+k^2-1)(x+k^2+k).$$
We obtain the following theorem, which immediately follows from \cite[Theorem 3.2]{BD11}.

\begin{theorem}\label{thm:infinitely-many-g-2-Z2Z2-2torsion}
Let $r\in \Z_{\geq 0}$, $k\in \Z\setminus \{-1,0,1\}$, and square-free $d\in \Z$ be such that $\Rank(E'_{2,d,k}(\Q))=r$, where $E'_{2,d,k}\colon dy^2=(x+k^2-k)(x+k^2-1)(x+k^2+k)$. Then, for the following curves of genus two 
$$C_{d,k,p}\colon dy^2=x^6+3dkpx^4+(3k^2p^2-1)d^2x^2+(k^3p^3-kp)d^3,$$ where $p\equiv 3\pmod{8}$ is a prime number, we have $\Rank(\Jac(C_{d,k,p})(\Q))=r$.  
\end{theorem}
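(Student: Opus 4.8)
The plan is to read off the decomposition of $\Jac(C_{d,k,p})$ promised by the construction preceding the statement and then apply additivity of Mordell--Weil rank along a $\Q$-isogeny. First I would record the bielliptic structure explicitly: substituting $u=x^2$ in the defining sextic and completing the cube shows that $C_{d,k,p}$ can be written as $dy^2=(x^2+dkp)^3-d^2(x^2+dkp)=(x^2+dkp)(x^2+dkp-d)(x^2+dkp+d)$, so it carries the two evident degree-two maps $x\mapsto x^2$ and $(x,y)\mapsto(1/x,\,y/x^3)$ to elliptic curves. Along the way one checks that $C_{d,k,p}$ really has genus two: its six Weierstrass points are $\pm\sqrt{-dkp}$, $\pm\sqrt{d(1-kp)}$, $\pm\sqrt{-d(1+kp)}$, and under the hypotheses ($d$ squarefree hence nonzero, $k\notin\{-1,0,1\}$, $p$ prime so $kp\neq\pm1$) these six values are pairwise distinct and nonzero, so the sextic is separable.

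Next I would invoke \cite[Theorem 3.2]{BD11}. Since $C_{d,k,p}$ is, by the construction recalled before the statement, the genus-two curve obtained by gluing $E^p\colon y^2=x^3-p^2x$ and the curve $E_{2,d,k}$ along their $2$-torsion, that theorem provides a $\Q$-isogeny $\Jac(C_{d,k,p})\sim E^p\times E_{2,d,k}$, and the linear change of variables recorded before the statement identifies $E_{2,d,k}$ with $E'_{2,d,k}$. Because rank is an isogeny invariant and additive on products, $\Rank(\Jac(C_{d,k,p})(\Q))=\Rank(E^p(\Q))+\Rank(E'_{2,d,k}(\Q))$. The first term vanishes: the hypothesis $p\equiv 3\pmod 8$ places $E^p$ among the congruent-number curves shown by Nagell \cite{Nagell29} to have rank zero, so $\Rank(E^p(\Q))=0$; the second term is $r$ by assumption. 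Hence $\Rank(\Jac(C_{d,k,p})(\Q))=r$.

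To see that these curves form an infinite family, I would note that reducing the defining equation modulo $p$ annihilates every $p$-divisible coefficient and leaves $dy^2\equiv x^2(x^2-d)(x^2+d)$, whose right-hand side has a repeated root; hence $C_{d,k,p}$ has bad reduction at $p$. Consequently, for all but finitely many primes $p'\equiv 3\pmod 8$ the curve $C_{d,k,p'}$ has good reduction at $p$ and is therefore not isomorphic to $C_{d,k,p}$, and since Dirichlet's theorem supplies infinitely many such primes $p'$, one obtains infinitely many pairwise non-isomorphic genus-two curves of rank $r$.

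I expect the only genuinely non-formal point to be the explicit part of the second step: confirming that the sextic written in the statement is precisely the curve produced by the gluing construction of \cite[Theorem 3.2]{BD11} applied to the pair $(E^p,E_{2,d,k})$, and matching the two elliptic quotients with $E^p$ and $E'_{2,d,k}$ while correctly tracking the quadratic twist by $d$. Everything downstream --- additivity of rank, Nagell's rank-zero theorem, and the reduction argument for the infinitude --- is routine.
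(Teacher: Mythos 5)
Your overall route coincides with the paper's: the paper's proof is essentially the one\--line observation that the statement follows from \cite[Theorem 3.2]{BD11} together with Nagell's theorem \cite{Nagell29}, exactly as you outline. The difficulty is the one step you explicitly defer --- checking that the printed sextic really is the gluing of $E^p$ and $E_{2,d,k}$ --- and that step fails for the equation as written. Your own factorisation already shows this: from $dy^2=(x^2+dkp)(x^2+dkp-d)(x^2+dkp+d)$, the quotient by $(x,y)\mapsto(x^2,y)$ is $dv^2=w(w-d)(w+d)$ with $w=x^2+dkp$, and rescaling $w=dW$, $v=dV$ gives $V^2=W^3-W$. This quotient does not involve $p$ at all and is \emph{not} $E^p\colon y^2=x^3-p^2x$ (it happens to have rank $0$, so this alone is not fatal). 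The second quotient, via $(x,y)\mapsto(x^2,xy)$, is the Jacobian of $dv^2=w(w+dkp)(w+dkp-d)(w+dkp+d)$, whose roots are $\{0,-dkp,-d(kp-1),-d(kp+1)\}$; after rescaling this is a model of $y^2=d(x-kp)(x^3-x)$, i.e.\ of $E'_{2,d,kp}$ rather than $E'_{2,d,k}$. Hence for the curve as printed one gets $\Rank(\Jac(C_{d,k,p})(\Q))=\Rank(E'_{2,d,kp}(\Q))$, a quantity that varies with $p$ and has no reason to equal $r$. So the identification you assert cannot simply be ``confirmed''; asserting it without computation is a genuine gap, and for the printed equation the desired conclusion does not follow.

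The source of the trouble is upstream of you: the displayed sextic appears to contain a typo. The fibre product of $E^p\colon y^2=x^3-p^2x$ and $E_2\colon y^2=d(x-kp)(x^3-p^2x)$ over the $x$\--line is $dy^2=(x^2+dkp)^3-d^2p^2(x^2+dkp)$, that is,
\[
dy^2=x^6+3dkpx^4+(3k^2-1)d^2p^2x^2+(k^3-k)d^3p^3,
\]
whose two bielliptic quotients are indeed $E^p$ and a model of $E'_{2,d,k}$ (the substitution $u=-dpU$ sends the roots $0,-dkp,-dp(k-1),-dp(k+1)$ of the second quotient's quartic to $0,k,k-1,k+1$). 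With this corrected equation your argument --- \cite[Theorem 3.2]{BD11}, additivity of rank under a $\Q$\--isogeny, Nagell's theorem for $p\equiv 3\pmod 8$, and bad reduction at $p$ for the infinitude --- goes through verbatim. A complete proof must therefore actually carry out the verification you postponed; doing so both exposes the discrepancy and shows how to repair it.
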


Testing the curves $E'_{2,d,k}$ for $d=1$ in Magma, we obtain the following rank information:
\begin{itemize}
\item $k=2$: $\Rank(E'_{2,1,2}(\Q))=0$;
\item $k=3$: $\Rank(E'_{2,1,3}(\Q))=1$;
\item $k=11$: $\Rank(E'_{2,1,11}(\Q))=2$;
\item $k=43$: $\Rank(E'_{2,1,43}(\Q))=3$;
\item $k=329$: $\Rank(E'_{2,1,329}(\Q))=4$.
\end{itemize}
For example, one more \emph{explicit} family of curves of genus two whose Jacobians have rank 4 is the family $C_{1,329,p}$, where $p\equiv 3\pmod{8}$. 

\subsection{Rational points on these curves}\label{subsec:rational-points} 
In this section, we only consider curves defined over $\Q$, so we exclude those from \S\ref{subsec:g2-Z2Z2-2torsion-any-number-field}. Even though we construct curves of higher rank, it is easy to determine their rational points in most of our examples (except for $C_{p,d}$ from \S\ref{subsec:g2-Z2-2torsion} when $p\equiv 3\pmod{4}$) because these curves admit a quotient map to a rank 0 elliptic curve. This elliptic curve has only finitely many points, and we can determine which pullbacks of these points are rational points. Now we focus on all but finitely many curves, which could still have more rational points.

For example, curves from \S\ref{subsec:g2-no-2torsion} and \S\ref{subsec:g2-Z2-2torsion} admit a quotient map to an elliptic curve of the form $E\colon y^2=x^3+s$, $s\in\Z$. By a well-known result \cite[Exercise 10.19]{Silverman}, when $s\neq -432$ is not a square or a cube in $\Z$, then $E_{\tors}=\langle O\rangle$. This is the case for almost all curves in \S\ref{subsec:g2-no-2torsion}, so $C_{a,m}(\Q)=\{\infty_{\pm}\}$ (with projective coordinates $(1:\pm1:0)$). 

On the other hand, also by \cite[Exercise 10.19]{Silverman}, for $E_p\colon y^2=x^3+p^3$ with $p\equiv 5\pmod{12}$, we have $E_p(\Q)=\{O,(-p,0)\}$. Hence, if $d$ in a square (i.e. $d=1$ since we assume $d$ is square-free), then $C_{p,d}(\Q)=\{\infty_{\pm}\}$. Otherwise, the set $C_{p,d}(\Q)$ is empty. When $p\equiv 3\pmod{4}$, the problem of determining $C_{p,d}(\Q)$ could be very difficult.

In \S\ref{subsec:g2-Z2Z2-2torsion}, the structure of the $\mathbb{Q}$-rational points of rank 0 elliptic curve $E^p\colon y^2=x^3-p^2x$ is well known: $E^p(\Q)=E^p(\Q)[2]\cong\Z/2\Z\times \Z/2\Z$. Pullback of nontrivial two-torsion points to $C_{d,k,p}$ would give an $x$-coordinate satisfying $x^2\in\{-dkp,-d(k-1)p,-d(k+1)p\}$, but then $x\notin \Q$ for almost all $p$. Hence, if $d=1$, then we have $C_{d,k,p}(\Q)=\{\infty_{\pm}\}$ almost always. Otherwise, we have $C_{d,k,p}(\Q)=\emptyset$.

\subsection{Full rational two-torsion - over any number field}\label{subsec:g2-Z2Z2-2torsion-any-number-field}
Let $K$ be any number field. We now consider elliptic curves $E_1, E_2 / K$ such that $E_1[2](K) \cong E_2[2](K) \cong \mathbb{Z}/2\mathbb{Z} \times \mathbb{Z}/2\mathbb{Z}$. Unlike the case where $K = \mathbb{Q}$, we will vary both elliptic curves over their families of quadratic twists. Let $a_1, a_2, a_3 \in \mathcal{O}_K \setminus \{0\}$. We consider the following elliptic curve, where $m$ is any square-free element of $\mathcal{O}_K$.
\begin{align*}
    E_{1,m} &: y^2 = m(x-a_1)(x-a_2)(x-a_3)
\end{align*}
We then choose $\alpha \in \mathcal{O}_K \setminus \{a_1,a_2,a_3\}$ and a square-free element $d \in \mathcal{O}_K$ to define the following elliptic curve by using \cite[Theorem 3.2]{BD11}:
\begin{equation*}
    E_{2} : y^2 = d(x-m\alpha)(x-ma_1)(x-ma_2)(x-ma_3).
\end{equation*}
After replacing $(x,y)$ with $(mx,m^2y)$, we obtain $E_{2,d}: y^2 = d(x-\alpha)(x-a_1)(x-a_2)(x-a_3)$. We can summarize our findings as stated in the following theorem.
\begin{theorem} \label{thm:infinitely-many-g-2-Z2Z2-2torsion-numberfield}
Let $r_1, r_2 \in \mathbb{Z}_{\geq 0}$, $a_1,a_2,a_3 \in \mathcal{O}_K$, $\alpha \in \mathcal{O}_K \setminus \{a_1,a_2,a_3\}$, and $m, d \in \mathcal{O}_K$ be square-free. Let $E_{1,m}: y^2 = m(x-a_1)(x-a_2)(x-a_3)$ and $E_{2,d}: y^2 = d(x-\alpha)(x-a_1)(x-a_2)(x-a_3)$ be elliptic curves such that $\Rank(E_{1,m}(K)) = r_1$ and $\Rank(E_{2,d}(K)) = r_2$. Then for the following curve of genus two
\begin{equation*}
    C_{m,d}: y^2 = m\left(\frac{1}{d}x^2 + \alpha - a_1 \right)\left(\frac{1}{d}x^2 + \alpha - a_2 \right)\left(\frac{1}{d}x^2 + \alpha - a_3 \right)
\end{equation*}
we have $\Rank(C_{m,d}(K)) = r_1 + r_2$.
\end{theorem}
Combining the above theorem with recent breakthroughs in distribution of algebraic rank of quadratic twist families of elliptic curves over number fields \cite{Smith2, KP25}, we can prove the following result for genus 2 curves over any number field $K$.
\begin{theorem} \label{thm:numberfield-main}
    Let $K$ be any number field. Let $0 \leq r \leq 2$ be a fixed integer. Then there are infinitely many genus two curves $C$ defined over $K$ such that $\Rank(Jac(C)(K)) = r$.
\end{theorem}
\begin{proof}
We choose nine prime elements $p_i, q_i, r_i$ of $\Oh_K$ so that the following conditions are satisfied:
   \begin{itemize}
       \item $p_i, q_j, r_k$ are totally positive.
       \item $p_i, q_j, r_k \equiv 1 \mod 8 \mathcal{O}_K$.
       \item $\# \mathcal{O}_K/(p_i), \# \mathcal{O}_K/(q_i), \# \mathcal{O}_K/(r_i) > 5$.
       \item For every $1 \leq i, j, k \leq 3$, we have $\left(\dfrac{p_i}{q_j}\right) = \left(\dfrac{q_j}{r_k}\right) = \left(\dfrac{r_k}{p_i}\right) = 1$.
   \end{itemize}

We will use the following result which is an immediate consequence of \cite[P. 20, Theorem]{Mitsui}
\begin{lemma}\label{lem:totally-positive-primes-in-progressions}
Let $I$ be an ideal of $K$ and $\rho\in \Oh_K$ be totally positive such that the ideals $I$ and $(\rho)$ are coprime. Then there are infinitely many totally positive prime elements of $p\in K$ satisfying $p\equiv \rho\pmod{I}$.   
\end{lemma}

Now we apply Lemma~\ref{lem:totally-positive-primes-in-progressions} to the ideal $I=(8)$ and $\rho=1$ to find totally positive prime elements $p_1,p_2,p_3\in \Oh_K$ such that $\#\mathcal{O}_K/(p_i)>5$. Then, we apply Lemma~\ref{lem:totally-positive-primes-in-progressions} to the ideal $I=(8p_1p_2p_3)$ and $\rho=1$ to find totally positive prime elements $q_1,q_2,q_3\in \Oh_K$ such that $\#\mathcal{O}_K/(q_i)>5$. Finally, we apply Lemma~\ref{lem:totally-positive-primes-in-progressions} to the ideal $I=(8p_1p_2p_3q_1q_2q_3)$ and $\rho=1$ to find totally positive prime elements $r_1,r_2,r_3\in \Oh_K$ such that $\#\mathcal{O}_K/(r_i)>5$. By construction, we have that $\left(\dfrac{p_i}{q_j}\right) = \left(\dfrac{q_j}{r_k}\right) = \left(\dfrac{r_k}{p_i}\right) = 1$.\\

For such a choice of $p_i, q_j, r_k$, we define $P$, $Q$, $R$ and $a_1, a_2, a_3$ as follows:
    \begin{equation*}
        P := p_1 p_2 p_3, \hspace{5pt} Q := q_1 q_2 q_3, \hspace{5pt} R := r_1 r_2 r_3, \hspace{5pt} a_1 := P^2 Q, \hspace{5pt} a_2 := Q^2 R, \hspace{5pt} a_3 := R^2 P.
    \end{equation*}
    
    Let $m,d$ be square-free elements of $\mathcal{O}_K$. We consider the family of genus $2$ curves $C_{m,d}$ defined as
    \begin{equation*}
        C_{m,d}: y^2 = m \left( \frac{1}{d}x^2 - a_1 \right) \left( \frac{1}{d}x^2 - a_2 \right) \left( \frac{1}{d}x^2 - a_3 \right),
    \end{equation*}
    which is isogenous to a product of two elliptic curves provided below:
    \begin{align*}
        E_{1,m} &: y^2 = m(x-a_1)(x-a_2)(x-a_3), \\
        E_{2,d} &: y^2 = d(x-a_2a_3)(x-a_1a_3)(x-a_1a_2).
    \end{align*}
    The Weierstrass equation for $E_{2,d}$ is obtained as follows. We set $\alpha = 0$ and apply Theorem \ref{thm:infinitely-many-g-2-Z2Z2-2torsion-numberfield} to get $E_2: y^2 = dx(x-a_1)(x-a_2)(x-a_3)$. We then apply the rational change of coordinates which replaces $(x,y)$ with $(-\frac{1}{x} \cdot {a_1 a_2 a_3}, \frac{y}{x^2} \cdot a_1 a_2 a_3)$ to get the Weierstrass equation for $E_{2,d}$.

    Notice that the four conditions on nine prime elements $p_i, q_j, r_k$ imply the following (because they all have odd valuation with respect to at least one of the primes $p_i, q_j, r_k$). 
    \begin{itemize}
        \item None of $(a_1-a_2)(a_1-a_3)$, $(a_2-a_1)(a_2-a_3)$, and $(a_3-a_1)(a_3-a_2)$ are squares in $K$.
        \item None of $a_2a_3(a_2-a_1)(a_3-a_1)$, $a_1a_3(a_1-a_2)(a_3-a_2)$, and $a_1a_2(a_1-a_3)(a_2-a_3)$ are squares in $K$.
    \end{itemize}
    By \cite[Theorem 2.14, Example 3.1]{Smith2}, there exists infinitely many (in fact a positive proportion of) square-free elements $m, d \in \mathcal{O}_K$ such that $\Rank(E_{1,m}(K)) = \Rank(E_{2,d}(K)) = 0$. Here, we note that there is a subtlety with generating infinitely many such rank 0 quadratic twist families of elliptic curves over any number field $K$. This is where the ``parity-invariant subcase'' appearing after \cite[Case 2.13]{Smith2} becomes relevant. (We would like to sincerely thank Adam Morgan for pointing out this subtlety.) Our construction of elliptic curves $E_{1,m}$ and $E_{2,d}$ shows that both curves have at least one place of multiplicative reduction. By the first paragraph of proof of \cite[Proposition 4.3]{MS24} and \cite[Lemma 4.2]{MS24}, performing an unramified quadratic twist at the local condition over such a multiplicative place changes the parity of 2-Selmer group of $E_{1,m}$ (and likewise for $E_{2,d}$). Hence, we can always find infinitely many $m, d \in \mathcal{O}_K$ (in fact a positive density of square-free $m,d \in \mathcal{O}_K$) such that the parity of 2-Selmer group of $E_{1,m}$ is even. By \cite[Theorem 2.14, Example 3.1]{Smith2}, there are infinitely many choices of $m,d \in \mathcal{O}_K$ such that $\Rank(\Jac(C_{m,d})(K)) = 0$.
    
    We turn our focus to constructing infinitely many curves $C_{m,d}$ such that $\Rank(\Jac(C_{m,d})(K)) = 1$. We note that this result for $g = 2$ was obtained before \cite{KM25}. By our choice of nine prime elements $p_i, q_j, r_k$ the elliptic curve $E_{1,1}$ satisfies the following three conditions:
    \begin{itemize}
        \item $a_1 - a_2$ has odd valuation with respect to primes $q_1, q_2$, and $q_3$, whereas $a_1 - a_3$ and $a_2-a_3$ are invertible squares modulo $q_1, q_2$, or $q_3$.
        \item $a_2 - a_3$ has odd valuation with respect to primes $r_1, r_2$, and $r_3$, whereas $a_2 - a_1$ and $a_3-a_1$ are invertible squares modulo $r_1, r_2$, or $r_3$.
        \item $a_3 - a_1$ has odd valuation with respect to primes $p_1, p_2$, and $p_3$, whereas $a_3 - a_2$ and $a_1-a_2$ are invertible squares $p_1, p_2$, or $p_3$.
    \end{itemize}
    These three conditions are in fact the ``3-generic'' conditions appearing in \cite[Definition 1.2]{KP25}. By \cite[Theorem 1.3]{KP25}, there exist infinitely many square-free elements $m$ such that $\Rank(E_{1,m}(K)) = 1$. We can again use \cite[Theorem 2.14, Example 3.1]{Smith2} to obtain that there are infinitely many square-free elements $d \in \mathcal{O}_K$ such that $\Rank(E_{2,d}(K)) = 0$. For such $m$ and $d$ we have $\Rank(\Jac(C_{m,d})(K)) = 1$.

    Lastly, we construct infinitely many curves $C_{m,d}$ such that $\Rank(\Jac(C_{m,d})(K)) = 2$. 
 We still choose nine totally positive prime elements $p_i, q_j, r_k\in \Oh_K$ in the same manner.

 We also choose the same $a_1,a_2,a_3$, but now we change $\alpha$. Consider $\alpha \in \Oh_K$ such that $\alpha$ is a quadratic residue modulo all $p_i,q_j,r_k$, but such that $\alpha\not\equiv a_1\pmod{r_i}$,  $\alpha\not\equiv a_2\pmod{p_i}$, and $\alpha\not\equiv a_3\pmod{q_i}$ for $1\leq i\leq 3$, by the assumption that all residue fields have cardinality greater than 5, we can certainly satisfy the conditions modulo each prime, and then by the Chinese Remainder Theorem obtain $\alpha$. 

   For such a choice of $p_i, q_j, r_k,$ and $\alpha$, we consider the family of genus 2 curves $C_{m,d}$ defined as
   \begin{equation*}
       C_{m,d} : y^2 = m \left( \frac{1}{d}x^2 + \alpha - a_1 \right)\left( \frac{1}{d}x^2 + \alpha - a_2 \right)\left( \frac{1}{d}x^2 + \alpha - a_2 \right)
   \end{equation*}
   which is isogenous to a product of two elliptic curves provided below:
   \begin{align*}
       E_{1,m} &: y^2 = m(x-a_1)(x-a_2)(x-a_3), \\
       E_{2,d} &: y^2 = d(x + (a_2-\alpha)(a_3-\alpha))(x+(a_1-\alpha)(a_3-\alpha))(x + (a_1-\alpha)(a_2-\alpha)).
   \end{align*}
   As before, our choices of $p_i, q_j, r_k$ imply that $E_{1,1}$ is $3$-generic. The choice of $\alpha$ implies that the following three conditions hold:
\begin{itemize}
    \item $(\alpha-a_3)(a_1 - a_2)$ has odd valuation with respect to primes $q_1, q_2$, and $q_3$, whereas $(\alpha-a_2)(a_1 - a_3)$ and $(\alpha - a_1)(a_2 - a_3)$ are invertible squares modulo $q_1, q_2$, or $q_3$.
    \item $(\alpha-a_1)(a_2-a_3)$ has odd valuation with respect to primes $r_1, r_2$, and $r_3$, whereas $(\alpha-a_3)(a_2 - a_1)$ and $(\alpha - a_2)(a_3-a_1)$ are invertible squares modulo $r_1, r_2$, or $r_3$.
    \item $(\alpha-a_2)(a_3 - a_1)$ has odd valuation with respect to primes $p_1, p_2$, and $p_3$, whereas $(\alpha-a_1)(a_3 - a_2)$ and $(\alpha - a_3)(a_1-a_2)$ are invertible squares modulo $p_1, p_2$, or $p_3$.
\end{itemize}
These three conditions show that $E_{2,1}$ is 3-generic. By \cite[Theorem 1.3]{KP25}, there exist infinitely many square-free elements $m$ and $d$ such that $\Rank(E_{1,m}(K)) = 1$ and $\Rank(E_{1,d}(K)) = 1$. For such $m$ and $d$ we have $\Rank(\Jac(C_{m,d})(K)) = 2$.
\end{proof}

\section{Higher genus curves} \label{sec:higher-genus}

We now use a more general construction than the method of glueing two elliptic curves explored in Section~\ref{sec:genus2}, namely, we produce hyperelliptic curves, which are the fibre product of two hyperelliptic curves. This allows us to construct infinitely many hyperelliptic curves of any genus over any number field whose Jacobians have fixed positive rank between $0$ and $2$. We also explore examples of genus 3, 4, 5, and 6 hyperelliptic curves over $\mathbb{Q}$ with fixed small positive rank.

\subsection{Fibre product of hyperelliptic curves}\label{subsec:fibre-product}

Let $K$ be a number field, $f,g\in K[x]$ different squarefree polynomials, and consider hyperelliptic curves $C_f\colon y^2=f(x)$ and $C_g\colon y^2=g(x)$. Then, there are double covers  $\pi_f\colon C_f\longrightarrow\mathbb{P}^1$ and $\pi_g\colon C_g\longrightarrow\mathbb{P}^1$. Hence, there is a curve $C/K$ whose function field $K(C)$ is the compositum $K(C_f) \otimes_{K(x)} K(C_g)$. If we denote $L=K(x)$ (the function field of $\mathbb{P}^1$), then $K(C_f) = L(\sqrt{f(x)})$ and $K(C_g) = L(\sqrt{g(x)})$,  and it follows that $K(C) = L(\sqrt{f(x)}, \sqrt{g(x)})$. Then, $C$ is the fibre product of $C_f$ and $C_g$ over $\mathbb{P}^1$. Furthermore, we note that $K(C) = L(\sqrt{f(x)}, \sqrt{g(x)})=L(\sqrt{f(x)}, \sqrt{f(x)g(x)})=L(\sqrt{f(x)g(x)}, \sqrt{g(x)})$. So, if we denote $h(x)$ to be the squarefree part of $f(x)g(x)$, then $C$ can also be viewed as the fibre product of $C_f$ and $C_h\colon y^2=h(x)$ over $\mathbb{P}^1$, and similarly also as the fibre product of $C_g$ and $C_h$.

Using the functoriality of Jacobians, the maps $\pi_f, \pi_g, \pi_h: C \longrightarrow C_f, C_g, C_h$ induce surjective homomorphisms $\psi_f: \mathrm{Jac}(C) \longrightarrow \mathrm{Jac}(C_f)$, $\psi_g: \mathrm{Jac}(C) \longrightarrow \mathrm{Jac}(C_g)$, and $\psi_h: \mathrm{Jac}(C) \longrightarrow \mathrm{Jac}(C_h)$, see \cite[p.328]{MUMFORD1974325} for a proof for surjectivity of these maps. We obtain a homomorphism $\psi=(\psi_f,\psi_g,\psi_h)\colon  \mathrm{Jac}(C) \longrightarrow \mathrm{Jac}(C_f)\times \mathrm{Jac}(C_g)\times \mathrm{Jac}(C_h)$ defined over $K$. We will show that it is surjective as well. Recall that the Prym variety $\mathrm{Prym}(C/C_f)$ is the kernel of $\psi_f$, because the function field $K(C)/K(C_f)$ is ramified at places dividing $h(x)$. and we have $\mathrm{Jac}(C) \sim \mathrm{Jac}(C_f) \times \mathrm{Prym}(C/C_f)$, which is defined over $K$. By \cite[page 346, statement (c)]{MUMFORD1974325}, we have an isomorphism $\mathrm{Prym}(C/C_f) \cong \mathrm{Jac}(C_g) \times \mathrm{Jac}(C_h)$. Hence, one obtains an isogeny between $\mathrm{Jac}(C)$ and $\mathrm{Jac}(C_f) \times \mathrm{Jac}(C_g) \times \mathrm{Jac}(C_h)$ over $\overline{K}$; in particular, this map is surjective over $\overline{K}$. This implies that $\dim(\mathrm{Jac}(C))=\dim(\mathrm{Jac}(C_f))+\dim(\mathrm{Jac}(C_g))+\dim(\mathrm{Jac}(C_h))$, so $g(C)=g(C_f)+g(C_g)+g(C_h)$, where $g(X)$ denotes the genus of the curve $X$. We use a slight abuse of notation with $g$ being a genus and a polynomial, but we believe that it is clear in the context which one is which. Given a smooth projective curve $X$, we denote by $X^k$ the product of $k$ copies of $X$, and by $X^{(k)}$ the $k$-th symmetric power of $X$. By using \cite[Theorem 5.1, Corollary 7.5]{Milne-JV}, we have the following commutative diagram.
\begin{equation*}
    \begin{tikzcd}[column sep = 10em]
        C^{g(C)} \arrow[d] \arrow[r, "\pi_f^{g(C_f)}\times\pi_g^{g(C_g)}\times\pi_h^{g(C_h)} "] &C_f^{g(C_f)} \times C_g^{g(C_g)} \times  C_h^{g(C_h)} \arrow[d] \\
        C^{(g(C))} \arrow[d] \arrow[r, "\pi_f^{g(C_f)}\times\pi_g^{g(C_g)}\times\pi_h^{g(C_h)} "] &C_f^{(g(C_f))} \times C_g^{(g(C_g))} \times  C_h^{(g(C_h))} \arrow[d] \\
        \Jac(C) \arrow[r, "\psi"] &\Jac(C_f) \times \Jac(C_g) \times \Jac(C_h)
    \end{tikzcd}
\end{equation*}
The upper right arrow map is clearly surjective as $g(C)=g(C_f)+g(C_g)+g(C_h)$, and all vertical maps are also surjective, hence, the bottom right map ($\psi$) is also surjective (over $\overline{K}$). Now,  \cite[Proposition 7.1]{Milne-AV} implies that $\psi$ is an isogeny. Since $\psi$ is defined over $K$, we have proven the following lemma.

\begin{lemma}\label{lem:isogeny-fibre-product}
The map $\psi\colon \Jac(C)\longrightarrow \Jac(C_f)\times \Jac(C_g)\times \Jac(C_h)$ is an isogeny defined over $K$.
\end{lemma}

\subsection{Higher genus - our strategy} \label{subsec:higher-genus-strategy}

Lemma~\ref{lem:isogeny-fibre-product} gives an idea on how to look for curves of any genus $g$ of some prescribed rank by descending to the same question but for smaller ranks of curves of smaller genus.

Given our strategy in Section~\ref{sec:genus2}, we can try to consider twists of hyperelliptic curves $C_f$, $C_g$, and $C_h$. However, their equations are related. If we choose to twist $C_f$ by $d$, and $C_g$ to twist by $m$ (we also allow $m=1$, i.e., preserve $C_g$), then in our construction, we need to consider the twist of $C_h$ by $dm$. Therefore, even though we can make quadratic twists of $C_f$ and $C_g$ to be independent, the quadratic twist on $C_h$ is not anymore independent. It will happen only in the case when $m=d$, but then the twists of $C_f$ and $C_g$ are not independent. Hence, to study the $\Rank(J(C)(K))$ as the sum of the ranks of Jacobians the twists of $C_f$, $C_g$, and $C_h$ is an extremely difficult problem in general because we need to control simultaneously dependent twists of different curves. 

Thus, our idea, which makes this strategy more efficient, is to make sure that one of the curves has genus 0, and then we can simply ignore its Jacobian. We achieve it by having a concrete relation between polynomials $f$ and $g$; namely, we require $f(x)=\gamma xg(x)$, where $\gamma\in K$ is a non-zero constant, and $g(0)\neq 0$ so that we consider smooth curves. Then, $C_h\colon y^2=\gamma x$ is isomorphic to $\mathbb{P}^1$ and will not contribute to the rank of the Jacobian. 

Furthermore, in this case, we obtain that the fibre product curve $C$ is a hyperelliptic curve and we get its explicit equation. Looking at the compositum of the function fields of $C_f$ and $C_g$, we have 
$K(C)=L(\sqrt{f(x)}, \sqrt{g(x)})=L(\sqrt{x}, \sqrt{g(x)})$. Now, we can identify the field $L(\sqrt{x})$ with $K(t)$ by putting $t=\sqrt{x}$, so it can also be seen as the function field of $\mathbb{P}^1$. Under this identification, we have that $L(\sqrt{x}, \sqrt{g(x)})\cong K(t,\sqrt{g(t^2)})$. The curve $C:y^2=g(x^2)$ has the function field $K(t,\sqrt{g(t^2)})$, hence $C$ is isomorphic to the fibre product of $C_f$ and $C_g$, and we may identify them. \\

We introduce some notation we will use in our construction.
Let $K$ be a number field, $f=x^{n}+a_{n-1}x^{n-1}+\cdots+a_1x+a_0\in \Oh_K[x]$ a monic polynomial of degree $n$, and $ d,m\in \Oh_K\backslash\{0\}$. Define
$$f_d(x)=x^n+da_{n-1}x^{n-1}+\cdots+d^{n-1}a_1x+d^na_0,$$
i.e., the polynomial such that $f_d(dx)=d^nf(x)$.

\begin{remark} \label{rem:isogeny-genus-g}
The fibre product construction in this case can be seen also very explicitly using the equations of the curves and the maps between them. 
Consider the following three hyperelliptic curves
\begin{align*}
C &\colon my^2=f_d(x^2); \\
C_1 &\colon my^2=xf_d(x); \\
C_2 &\colon my^2=f_d(x).
\end{align*}
The curve $C$ admits two degree 2 morphisms
\begin{align*}
\psi_1\colon C\longrightarrow C_1,&\;\; \psi_1(x,y)=(x^2,xy); \\
\psi_2\colon C\longrightarrow C_2,&\;\; \psi_2(x,y)=(x^2,y).
\end{align*}
We know (by what we proved in \S\ref{subsec:fibre-product}) that $C$ is the fibre product of $C_1$ and $C_2$ over $\mathbb{P}^1$ and  $\psi=({\psi_1}_*,{\psi_2}_*)\colon \Jac(C)\longrightarrow \Jac(C_1)\times \Jac(C_2)$ is an isogeny defined over $K$.

One can note that $C$ has a group of automorphisms that contains $\Z/2\Z\times \Z/2\Z$ as the maps $(x,y)\mapsto (\pm x,\pm y)$ are automorphisms of $C$. And the three quotients of $C$ by these nontrivial automorphisms are precisely $C_1$, $C_2$, and $\mathbb{P}^1$. Hence, we could use the work of Kani and Rosen \cite{KR89} to conclude that $\Jac(C)\sim \Jac(C_1)\times \Jac(C_2)$.

Additionally, there is another way to check that $\psi$ is an isogeny, as suggested by Maarten Derickx. We can note that the pull-backs $\psi_1^*(\Jac(C_1))\subseteq \Jac(C)$ and $\psi_2^*(\Jac(C_2))\subseteq \Jac(C)$  have $0$-dimensional intersection inside $\Jac(C)$. This can be checked by pulling-back the holomorphic differentials $\frac{dx}{y},\ldots\frac{x^{\lfloor\frac{n}{2}\rfloor-1}dx}{y}$ from $\Jac(C_1)$ and $\frac{dx}{y},\ldots\frac{x^{\lfloor\frac{n-1}{2}\rfloor-1}dx}{y}$ from $\Jac(C_2)$, which give the full basis of the holomorphic differentials $\frac{dx}{y},\ldots\frac{x^{n-2}dx}{y}$ on $\Jac(C)$.
\end{remark}

Finally, we can explain what is our strategy to construct hyperelliptic curves of any genus and fixed rank between 0 and 2 (or conditional ranks under the existence of a curve of a specific shape and rank). Note that, depending on the parity of $n$, after a linear change of variables ($x\mapsto dx$), we can simplify the equations for $C_1$ and $C_2$

\begin{align} 
    C_1\cong dm y^2=xf(x),\; C_2\cong m y^2=f(x)     & \text{ if } 2 \mid n, \label{eq:cases-even} \\
    C_1\cong m y^2=xf(x),\; C_2\cong dm y^2=f(x)     & \text{ if } 2\nmid n. \label{eq:cases-odd}
\end{align}

In general, we consider two twists of curves $C_1$ and $C_2$, but after specifying $m$, the other twist is independent of $m$ (by choosing an appropriate $d$). This is essential for the proof of Theorem~\ref{thm:genus-g-twists-rank+1}. 

However, for the theorems before, we can just consider the case when $m=d$, when, one of the curves is isomorphic to a fixed one, given by an odd degree model, whereas an even degree model becomes a twist by $d$. In this case, this gives a clear strategy to construct infinitely many curves of genus $g$ of a fixed rank $r$; we just need to find a specific one satisfying some mild conditions. 

\subsection{Our constructions}\label{subsec:constructions}

Before we proceed, we recall a few definitions and results from previous literature.
\begin{definition}[Definition 1.2 and 2.2 of \cite{KM25}]\label{def:B-generic}
    Let $C: y^2 = (x-a_1) \cdots (x-a_{2g+1})$ be an odd degree hyperelliptic curve over a number field $K$. Given a place $\omega$ of $K$ and a pair of distinct indices $1 \leq i, j \leq 2g+1$, we say that $\omega$ is multiplicative of type $\{i,j\}$ if for every pair of distinct indices $1 \leq k, \ell \leq 2g+1$, we have
    \begin{equation}
        \mathrm{val}_\omega(a_k - a_\ell) = \begin{cases}
            1 &\text{ if } \{k,\ell\} = \{i,j\}, \\
            0 &\text{ otherwise}.
        \end{cases}
    \end{equation}
    Given a positive number $B > 0$, we say that $C$ is $B$-generic if there exist $4g^2 + 4g - 1$ distinct finite places 
    \begin{equation*}
        \begin{cases}
            \omega_1, \omega_2, &\cdots, \omega_{4g^2 + 2g - 1}, \\
            \omega'_1, \omega'_2, &\cdots, \omega'_{2g}.
        \end{cases}
    \end{equation*}
    which satisfy the following four conditions.
    \begin{itemize}
        \item For every $1 \leq i, j \leq 2g$, the place $\omega_{(i-1)2g + j}$ is multiplicative of type $\{j, 2g+1\}$.
        \item For every $1 \leq i \leq 2g - 1$, the place $\omega_{4g^2 + i}$ is multiplicative of type $\{i, i+1\}$.
        \item For every $1 \leq i \leq 4g^2 + 2g - 1$, we have $
        \#\Oh_K/(\omega_i) \geq B$.
        \item For every $1 \leq i \leq 2g$, the place $\omega'_i$ is multiplicative of type $\{i, 2g + 1\}$.
    \end{itemize}
\end{definition}

\begin{definition}\label{def:B-uber-generic} 
Using the notation of Definition~\ref{def:B-generic}, we say that $C$ is $B$-\emph{\"{u}ber-generic} if the following conditions are satisfied: There are $g^2(2g+1)$ distinct places of $K$ such that for each pair $1\leq k<\ell\leq 2g+1$ we have for exactly $2g+1$ places $\omega$ that 
\begin{equation}
        \mathrm{val}_\omega(a_k - a_\ell) = \begin{cases}
            1 &\text{ if } \{k,\ell\} = \{i,j\}, \\
            0 &\text{ otherwise},
        \end{cases}
    \end{equation}
and for all places $\omega$, and all $a_i$, we have $\mathrm{val}_w(a_i)=0$ and $
        \#\Oh_K/(\omega) \geq B$.
\end{definition}

We focus on understanding the Mordell-Weil rank of the family of quadratic twists of hyperelliptic curves that are $B$-\"uber-generic. Before doing so, we recall the following condition on $C$ from \cite[Condition 5.1]{BM25} and \cite[Proposition 4.3]{MS24}. Once again, we would like to thank Adam Morgan for suggesting the importance of the third condition provided below.
\begin{condition}[Condition 5.1 from \cite{BM25} and Proposition 4.3 from \cite{MS24}] \label{condition:Morgan}
    Let $C$ be a hyperelliptic curve over $K$ with $K(\mathrm{Jac}(C)[2]) = K$. Then $C$ satisfies the following three conditions.
    \begin{itemize}
        \item The invariant subspaces of $\mathrm{Jac}(C)[2]$ with respect to $\mathrm{Gal}(K(\mathrm{Jac}(C)[4])/K)$ are $0$ or $\mathrm{Jac}(C)[2]$.
        \item The set of elements of $\mathrm{End}$ which commute with every element of $\mathrm{Gal}(K(\Jac(C)[4])/K)$ via multiplication is isomorphic to $\mathbb{F}_2$.
        \item There exists a place $\omega$ of $K$ such that $C$ has a single nodal singularity at $\omega$. (In particular, the latter condition is satisfied if there exists a place $\omega$ and a tuple $(i,j)$ such that $\mathrm{val}_\omega(a_i - a_j) = 1$ and $\mathrm{val}_\omega(a_k - a_l) = 0$ for any other $k,l$ such that $\{k,l\} \neq \{i,j\}$.)
    \end{itemize}
\end{condition}

\begin{remark} \label{remark:generic->Condition5.1}
    We note that if $C$ is $B$-generic, then there exists a square-free element $d \in \Oh_K^\times$ such that $C_d$ satisfies Condition \ref{condition:Morgan}. To see this, we note that \cite[Proposition 8.10]{BM25} shows that any hyperelliptic curve $C: y^2 = (x-a_1)\cdots(x-a_{2g+1})$ satisfying the following two conditions 
    \begin{itemize}
        \item There exists a set of places $\{\omega_1, \omega_2, \cdots, \omega_{2g}\}$ such that for each $1 \leq i \leq 2g$, $\mathrm{val}_{\omega_i}(a_i - a_{2g+1}) = 1$ whereas $\mathrm{val}_{\omega_i}(a_k - a_l) = 0$ for any other $k,l$ such that $\{k,l\} \neq \{i,2g+1\}$.
        \item There exists an element $\zeta \in \mathrm{Sel}_2(\Jac(C)) \setminus \Jac(C)[2]$ such that the restriction of $\zeta$ over $H^1(K_{\omega_i}, \Jac(C)[2])$ is unramified for all $1 \leq i \leq 2g$.
    \end{itemize}
    satisfies the first two requirements in Condition \ref{condition:Morgan}. By \cite[Theorem 3.8]{KM25}, there exists a square-free element $d \in \Oh_K^\times$ such that the quadratic twist $C_d$ of a $B$-generic hyperelliptic curve $C$ satisfies the two bullet points mentioned above. By the condition on $a_i$'s, one can easily check that $C$ has a single nodal singularity at the place $\omega_1$, which satisfies the last condition from Condition \ref{condition:Morgan}.
\end{remark}

\begin{remark} \label{remark:weil->Condition5.1}
    Let $C$ be a hyperelliptic curve over $\mathbb{Q}$. One can check the first two conditions from Condition \ref{condition:Morgan} by computing the Weil pairing $e_2: \Jac(C)[2] \times \Jac(C)[2] \to \mu_2$. Suppose $\langle P_1, P_2, \cdots, P_{2g}\rangle$ is a basis for the set $\Jac(C)[2]$. If for every pair $1 \leq i < j \leq 2g$ we have $e_2(P_i,P_j) = -1$, then the first two conditions hold by \cite[Proposition 2.15]{BM25}. 

    For the special case when $C$ is given by the Weierstrass model $C: y^2 = (x-b_1)\cdots(x-b_{4k+3})$ for some positive integer $k$, we can combine \cite[Corollary 7.6]{Yu16} and \cite[Theorem 2.14]{Smith2} to ensure that at least $50\%$ of Jacobians of quadratic twists of $C$ have rank $0$. In the case when $C$ is not of the form above, we can replace the third condition in Condition \ref{condition:Morgan} with the requirement that there exists a single square-free element $d \in \mathbb{Q}^\times$ such that $\dim_{\mathbb{F}_2} \mathrm{Sel}_2(\mathrm{Jac}(C_d)/\mathbb{Q}) \equiv 0 \mod 2$. This condition ensures that there is a positive density of Jacobians of rank $0$, as shown in the discussion following after \cite[Case 2.13]{Smith2} and the main theorem of \cite{BM25}.
\end{remark}

\begin{theorem} \label{thm:Condition5.1}
    Let $C: y^2 = (x-a_1)\cdots(x-a_{2g+1})$ be a hyperelliptic curve over a number field $K$ that satisfies Condition \ref{condition:Morgan}. Then there are infinitely many square-free elements $d \in \Oh_K^\times$ such that $\mathrm{Rank}(\Jac(C_d)(K)) = 0$.
\end{theorem}
\begin{proof}
    The proof of \cite[Corollary 6.15]{KM25} and the first paragraph of the proof of \cite[Proposition 4.3]{MS24} implies that all the conditions in \cite[Case 2.13]{Smith2} are satisfied, and that a positive density of square-free elements $d \in \Oh_K$ satisfies $\dim_{\mathbb{F}_2} \mathrm{Sel}_{2}(\mathrm{Jac}(C_d)/K) \equiv 0 \mod 2$. Hence, we can apply \cite[Theorem 2.14]{Smith2} to show that there are infinitely many quadratic twists of $C$ such that $\mathrm{Rank}(\Jac(C_d)/K) = 0$.
\end{proof}

\begin{corollary}
    For any $g\geq 2$, there are infinitely many hyperelliptic curves $C$ of genus $g$ over a number field $K$ such that $\mathrm{Rank}(\mathrm{Jac}(C)(K)) = 0$.
\end{corollary}
\begin{proof}
    We apply Theorem \ref{thm:Condition5.1} to any $B$-generic hyperelliptic curve $C$ of genus $g$, which can easily be constructed using the Chinese Remainder Theorem.
\end{proof}

\begin{theorem}\label{thm:genus-g-twists-rank+0}
Let $g\geq 3$ be a positive integer and denote $k=\lfloor\frac{g+1}{2}\rfloor$. Assume there are $b_1,\ldots,b_{2k+1}\in \Oh_K$ so that the hyperelliptic curve 
$$X\colon y^2=(x-b_1)\cdots(x-b_{2k+1})$$
has $\Rank(\Jac(X)(K))=r$. Suppose for each specified value of $g$ as mentioned below, the curve $Y_1$ satisfies Condition \ref{condition:Morgan}:
\begin{itemize}
    \item $g = 2k$, and $Y_1: y^2 = x(x-b_1)\cdots(x-b_{2k+1})$.
    \item $g = 2k-1$, and $Y_1: y^2 = (x-(b_1-b_{2k+1})) \cdots (x-(b_{2k}-b_{2k+1}))$.
\end{itemize}
Then, there are infinitely many hyperelliptic curves of genus $g$ whose Jacobian over $K$ has rank $r$.
\end{theorem}

\begin{proof}
Suppose $g=2k$, Let $Y_d \colon dy^2=x(x-b_1)\cdots(x-b_{2k+1})$ be a quadratic twist of $Y_1$ by $d$. By Theorem \ref{thm:Condition5.1}, there are infinitely many such quadratic twists $Y_d$ such that $\mathrm{Rank}(\Jac(Y_d)(K)) = 0$.
Hence, there are infinitely many genus $g$ curves 
$$C_d\colon dy^2=(x^2-db_1)\cdots (x^2-db_{2k+1})$$
of rank $r$ as $\Jac(C_d)$ is isogenous to $\Jac(X)\times \Jac(Y_d) $ by Remark~\ref{rem:isogeny-genus-g}.

Similarly, if $g=2k-1$, then, 
$$X\cong X'\colon y^2=x(x-(b_1-b_{2k+1}))\cdots(x-(b_{2k}-b_{2k+1})).$$ By Theorem \ref{thm:Condition5.1}, there are infinitely many twists 
$$Y_d\colon dy^2=(x-(b_1-b_{2k+1}))\cdots(x-(b_{2k}-b_{2k+1}))$$ of rank 0. By Remark~\ref{rem:isogeny-genus-g}, there are infinitely many curves 
$$C_d\colon dy^2=(x^2-d(b_1-b_{2k+1}))\cdots(x^2-d(b_{2k}-b_{2k+1}))$$
of genus $g$ such that $\Rank(\Jac(C_d)(K))=r$.
\end{proof}

\begin{remark}
    We note that the affine models for curves $Y_1$ in both cases $g = 2k$ and $g = 2k-1$ are defined in terms of even degree polynomials in $x$. For computational purposes, we rewrite the models for $Y_1$ in terms of odd degree polynomials in $x$. For the case when $g = 2k$, let $\mathcal{B} := \prod_{i=1}^{2k+1} b_i$. Then the equation defining $Y_1$ can be rewritten as
    \begin{equation*}
        Y_1 \cong Y_1' : y^2 = \left(x + \frac{\mathcal{B}}{b_1} \right) \cdot \left(x + \frac{\mathcal{B}}{b_2} \right) \cdots \left(x + \frac{\mathcal{B}}{b_{2k+1}} \right).
    \end{equation*}
    If $g = 2k-1$, let $\mathcal{B} := \prod_{i=2}^{2k}(b_i - b_1)$. Then we can rewrite the equation defining $Y_1$ as
    \begin{equation*}
        Y_1 \cong Y_1' : y^2 = \left(x + \frac{\mathcal{B}'}{b_2 - b_1} \right) \cdot \left(x + \frac{\mathcal{B}'}{b_3 - b_1} \right) \cdots \left(x + \frac{\mathcal{B}'}{b_{2k} - b_1} \right).
    \end{equation*}
Then, in our applications, see \S\ref{subsec:examples-3456-Q}, we check Condition~\ref{condition:Morgan} directly for these odd degree models using Magma.   
\end{remark}

We can construct such examples in a different way, i.e., coming of the hyperelliptic curves, whose Jacobians have very few $K$-rational 2-torsion points, using the result of Yu, \cite[Theorem 1.2]{Yu19}.

\begin{theorem}\label{thm:any-g-rank-r-Yu-twists}
Let $g\geq 3,r\geq 0$ be integers and $K$ a number field.
\begin{enumerate}
\item Assume that $g=2k$ for some $k\in\N$ and that there exists a hyperelliptic curve $X:y^2=f(x)$ such that $\Rank(\Jac(X)(K))=r$, where $f\in \Oh_K[x]$, $\deg(f)=2k+1$, is a monic polynomial whose splitting field has Galois group isomorphic to $S_{2k+1}$. Suppose in addition that there exists a square-free element $m \in \mathcal{O}_K$ such that the curve $Y_m: my^2 = xf(x)$ satisfies $\dim_{\mathbb{F}_2}\mathrm{Sel}_2(\mathrm{Jac}(Y_m)/K) \equiv 0 \mod 2$. Then, there are infinitely many genus $g$ curves $C_d: dy^2 = f_d(x^2)$ of rank $r$, where $d\in \Oh_K$.
\item Assume that $g=2k-1$ for some $k\in\N$ and that there exists a hyperelliptic curve $X:y^2=x(x-1)f(x)$ such that $\Rank(\Jac(X)(K))=r$, where $f\in \Oh_K[x]$, $\deg(f)=2k-1$, is a monic polynomial whose splitting field has Galois group isomorphic to $S_{2k-1}$. Suppose in addition that there exists a square-free element $m \in \mathcal{O}_K$ such that the curve $Y_m: my^2 = (x-1)f(x)$ satisfies $\dim_{\mathbb{F}_2}\mathrm{Sel}_2(\mathrm{Jac}(Y_m)/K) \equiv 0 \mod 2$. Then, there are infinitely many genus $g$ curves $C_d: dy^2 = (x^2-d^2)f_d(x^2)$ of rank $r$, where $d\in \Oh_K$.
\end{enumerate}  
\end{theorem}

\begin{proof}
\begin{enumerate}
\item\label{proof:Yu-first-case} We note that the existence of a square-free element $m \in \mathcal{O}_K$ such that $Y_m: my^2 = xf(x)$ satisfies $\dim_{\mathbb{F}_2}\mathrm{Sel}_2(\mathrm{Jac}(Y_m)/K) \equiv 0 \mod 2$ implies that the disparity $\delta(\mathrm{Jac}(Y_1)/K) \neq -\frac{1}{2}$ appearing in the statement of \cite[Theorem 1.2]{Yu19}. The assumptions assure that we can apply \cite[Theorem 1.2]{Yu19} to ensure that there are infinitely many quadratic twists $Y_d\colon dy^2=xf(x)$ of $Y\colon y^2 = xf(x)$ such that $\mathrm{Rank}(\Jac(Y_d)(K)) = 0$. Then, the statement follows from Remark~\ref{rem:isogeny-genus-g} which gives that $\Jac(C_d)$ is isogenous to $\Jac(X)\times \Jac(Y_d)$ over $K$, and the argument that there are infinitely many curves that are non-isomorphic among them is as before. 
\item The proof is the same as in \eqref{proof:Yu-first-case}, and now we apply \cite[Theorem 1.2]{Yu19} to conclude that there are infinitely many quadratic twists $Y_d\colon dy^2=(x-1)f(x)$ of $Y\colon y^2 = (x-1)f(x)$ such that $\mathrm{Rank}(\Jac(Y_d)(K)) = 0$.
\end{enumerate} 
\end{proof}

\begin{theorem}\label{thm:genus-g-twists-rank+1}
    Let $k>1$ be a positive integer. There is $B>0$ such that the following holds. Let $X: y^2 = (x-a_1)\cdots(x-a_{2k+1})$ be a hyperelliptic curve over a number field $K$ that $X$ is $B$-\"{u}ber-generic and it has a twist $X_m: my^2 = (x-a_1)\cdots(x-a_{2k+1})$ such that $\Rank(\Jac(X_m)(K))=r$. Then there are infinitely many hyperelliptic curves $C/K$ of genus $g$, where $g\in\{2k-1,2k\}$ satisfying $\Rank(\Jac(C)(K))=r+1$. 
\end{theorem}
\begin{proof}
The easier case is when $g=2k-1$. We may assume that $a_{2k+1}=0$ (otherwise just replace $x-a_{2k+1}$ by $x$). We aim to create the curve $C$ as a fibre product of $X_m$ with $Y_{dm}: {dm}y^2 = (x-a_1)\cdots(x-a_{2k})$ (we are in the situation of Equation~\eqref{eq:cases-even}, but with changed twisting numbers, here $m$ corresponds to $dm$ from Equation~\eqref{eq:cases-even}, and then $dm$ here corresponds to $d^2m$ equal to $m$ modulo squares from Equation~\eqref{eq:cases-even}). We choose $B>0$ from \cite[Theorem 1.3]{KM25} for genus $k-1$. Our assumption of being $B$-\"{u}ber-generic clearly implies that $Y: y^2 = (x-a_1)\cdots(x-a_{2k})$ is $B$-generic. Hence, by \cite[Theorem 1.3]{KM25}, there are infinitely many twists $Y_{dm}: {dm}y^2 = (x-a_1)\cdots(x-a_{2k})$ of $Y$ such that $\mathrm{Rank}(\Jac(Y_{dm})(K)) = 1$. Then, for $C:my^2=(x^2-da_1)\cdots(x^2-da_{2k})$, by Remark~\ref{rem:isogeny-genus-g}, we have that $\Jac(C)$ is isogenous to $\Jac(X_m)\times \Jac(Y_{dm})$ over $K$. Hence, $\Rank(\Jac(C)(K))=r+1$, and the argument that there are infinitely many non-isomorphic curves among them is as before.

Now, we consider the case $g=2k$.  We may assume now $a_1\cdots a_{2k+1}\neq 0$. We aim to construct the curve $C$ as a fibre product of $X_m$ with $Y_{dm}: dmy^2 = x(x-a_1)\cdots(x-a_{2k+1})$ (we are in the situation of Equation~\eqref{eq:cases-odd}, but, again, with changed twisting numbers, here $m$ corresponds to $dm$ from Equation~\eqref{eq:cases-odd}, and then $dm$ here corresponds to $d^2m$ equal to $m$ modulo squares from Equation~\eqref{eq:cases-odd}). We choose $B>0$ from \cite[Theorem 1.3]{KM25} for genus $k$. Now, we prove that $Y: y^2 = x(x-a_1)\cdots(x-a_{2k+1})$ is $B$-\"{u}ber-generic, so we need to use an odd-degree model for $Y$. The change of variables $x\mapsto\frac{1}{x}$, induces that $Y$ is isomorphic over $K$ to 
$$Z:y^2=\left(x+\frac{A}{a_1}\right)\cdots\left(x+\frac{A}{a_{2k+1}}\right),$$
where $A=a_1\cdots a_{2k+1}$. Now, the assumption that $X$ is $B$-\"{u}ber-generic gives that for any $i,j$ and valuation $w$
$$\mathrm{val}_w\left(\frac{A}{a_i}-\frac{A}{a_j}\right)=\mathrm{val}_w\left(\frac{A}{a_ia_j}(a_j-a_i)\right)=\mathrm{val}_w(a_j-a_i),$$
so $Z$ is $B$-generic. Therefore, we can apply \cite[Theorem 1.3]{KM25} to conclude that there are infinitely many twists $Z_{dm}$ of $Z$ such that $\Rank(\Jac(Z_{dm})(K))=1=\Rank(\Jac(Y_{dm})(K))$ for the corresponding twist $Y_{dm}$ of $Y$. Now, we conclude as in the previous case.
\end{proof}

\begin{remark}
    Using the same idea of the proof that $X$ being $B$-\"uber-generic implies $Y_d$ being $B$-generic, we can use Remark \ref{remark:generic->Condition5.1} and \cite[Theorem 2.14]{Smith2} to prove that if $X$ is $B$-\"uber-generic and $\mathrm{Rank}(\Jac(X)(K)) = r$, then there are infinitely many hyperelliptic curves of genus $g$ whose Jacobian over $K$ has rank $r$.
\end{remark}

\begin{theorem}\label{thm:any-g-rank-2}
Let $g\geq 3$ be an integer and $K$ a number field. There are infinitely many hyperelliptic curves $C$ of genus $g$ such that $\Rank(\Jac(C)(K))=r$ for $r = 1$ and $2$.    
\end{theorem}
\begin{proof}
    By \cite{KM25}, we can always find a quadratic twist of a $B$-\"uber-generic curve $X$ of genus $\lfloor\frac{g+1}{2}\rfloor$ whose Jacobian over $K$ has rank $1$. We then apply Theorem \ref{thm:genus-g-twists-rank+0} ($+$ Remark \ref{remark:generic->Condition5.1}) and Theorem \ref{thm:genus-g-twists-rank+1} to conclude.
\end{proof}

\subsection{Examples over $\Q$}\label{subsec:examples-3456-Q}
Now we prove the existence of infinitely many curves over $\Q$ of genus $g$ whose Jacobian over $\Q$ has rank $r$ for $g\in\{3,4\}$ and $1\leq r\leq 4$ and $g\in\{5,6\}$ and $1\leq r\leq 3$. Even though the existence of the rank 1 and 2 follows from Theorem~\ref{thm:any-g-rank-2}, we still decided to include these examples.

We will always use Theorem~\ref{thm:genus-g-twists-rank+0}. We will find a hyperelliptic curve of the shape 
$$X\colon y^2=(x-b_1)\cdots(x-b_{2k+1})$$
such that $\Rank(\Jac(X)(\Q))=r$ and check that the conditions for the curve $Y_1$ from Theorem~\ref{thm:genus-g-twists-rank+0} using Remark~\ref{remark:weil->Condition5.1}. We note that only in the case for genus three curves, we do not need to do any checks because in this case $Y_1$ is an elliptic curve and then the result about twisting comes automatically from \cite{Smith3}.

\subsubsection{Genus three and four}\label{subsubsec:genus34} We present a table of genus two curves $X\colon y^2=(x-b_1)\cdots(x-b_{5})$, which, by Remark \ref{remark:weil->Condition5.1} and Theorem~\ref{thm:genus-g-twists-rank+0}, prove the existence of infinitely many hyperelliptic curves $C/\Q$ of genus three and four such that $\Rank(\Jac(C)(\Q))=r$, for the following $r$ from the table.

\begin{center}
    \begin{tabular}{|c|c|c|c|c|c|}
    \hline
      $r$  & $b_1$ & $b_2$ & $b_3$ & $b_4$ & $b_5$ \\
      \hline
      1  & 1 & 2 & 3 & 4 & 10 \\
      \hline
      2  & 1 & 2 & 3 & 4 & 37 \\
      \hline
      3  & -49 & -4 & 1 & 43 & 51 \\
      \hline
      4  & -49 & 1 & 2 & 28 & 51 \\
      \hline
    \end{tabular}
    \label{tab:g=2:ranks-0-to-4}
\end{center}

\subsubsection{Genus five and six}\label{subsubsec:genus56} We present a table of genus three curves $X\colon y^2=(x-b_1)\cdots(x-b_{7})$, which, by Remark \ref{remark:weil->Condition5.1} and Theorem~\ref{thm:genus-g-twists-rank+0}, prove the existence of infinitely many hyperelliptic curves $C/\Q$ of genus five and six such that $\Rank(\Jac(C)(\Q))=r$, for the following $r$ from the table.

\begin{center}
    \begin{tabular}{|c|c|c|c|c|c|c|c|}
    \hline
      $r$  & $b_1$ & $b_2$ & $b_3$ & $b_4$ & $b_5$ & $b_6$ & $b_7$ \\
      \hline
      1  & 1 & 2 & 3 & 4 & 5 & 6 & 7 \\
      \hline
      2  & 1 & 2 & 3 & 4 & 6 & 7 & 11 \\
      \hline
      3  & 1 & 2 & 3 & 4 & 5 & 14 & 21 \\
      \hline
    \end{tabular}
    \label{tab:g=3:ranks-0-to-3}
\end{center}

\begin{remark}
Even though it sounds appealing to use Theorem~\ref{thm:genus-g-twists-rank+1} to construct examples, we note that the $B$-generic and $B$-\"{u}ber-generic conditions are less practical to use. Namely, a careful inspection on the condition on $B$ from \cite{KM25} shows that $B>9600$ for genus two curves, so we will need to use many large primes to construct such $B$-generic or $B$-\"{u}ber-generic curves. Moreover, solving the Chinese Remainder Theorem system will result in large coefficients of the curve $C$. Hence, we have not tried to use $B$-generic or $B$-\"{u}ber-generic curves to get concrete examples. 

Also, Theorem~\ref{thm:any-g-rank-r-Yu-twists} is more tricky for applications because it is much easier to do explicit descent for elliptic curves defined by a polynomial which splits completely, rather than of those whose Galois group is as maximal as possible. Hence, we have not tried to use this theorem to construct examples either.
\end{remark}

\section{An answer to our question}\label{sec:challenges}

If the Jacobian of a curve is decomposable as a product of Jacobians of curves, then we reduce the problem of computing the rank to smaller genus curves, which is, in principle, easier, even though, as we have seen, it also contains its own subtleties. Since we gave a construction of a decomposable Jacobian of a fixed positive rank, we posed a challenge in the previous version of this article as stated in June 2025 to find such examples of curves $C$ such that $\Jac(C)$ is absolutely simple. The breakthrough work of  Koymans and Morgan \cite{KM25} resolves the following question (and in fact for any genus $g$). 

\begin{question}
Is there an infinite family of curves $C/\Q$ of genus two or three with absolutely simple Jacobian such that the rank of $\Jac(C)(\Q)$ is equal to $1$ (or some other fixed positive integer)?   
\end{question}

We keep the question as stated in its original form, as \cite[Theorem 1.1]{KM25} specifically cites the problem mentioned beforehand. We sincerely congratulate and thank them for solving the question stated above.

\subsection*{Acknowledgements} We would like to thank the Max Planck Institute for Mathematics for its generous support. We would like to also thank Lodha Mathematical Sciences Institute for its generous support and hospitality during the Thematic Programme on Arithmetic Statistics where substantial results of this paper were achieved. We would especially like to thank Maarten Derickx and Carlo Pagano for their advice and inspiring discussions about this project, and in particular, about motivating us to consider the general situation from \S\ref{subsec:fibre-product}, Jennifer Balakrishnan, Du\v{s}an Dragutinovi\'{c}, Peter Koymans, Adam Morgan, and Lazar Radi\v{c}evi\'{c} for numerous conversations and their enlightening comments about this work. We would also like to thank Jordan Ellenberg, Matija Kazalicki, Jef Laga, Filip Najman, Oana Padurariu, Alex Smith, and Wadim Zudilin for helpful discussions.

\bibliographystyle{alpha}
\bibliography{draft}

\end{document}